\def\R{\mathbb{R}}
\def\P{{\mathbb P}}     % Probability (Louis)
\def\E{{\mathbb E}}     % Expectation (Louis)
\def\<{{\langle}} %Louis
\def\>{{\rangle}} %Louis
\DeclareMathOperator{\erf}{erf}
\begin{document}

\markboth{Fan et al.}{Hitting Time of Rapid Intensification Onset}

%%%%%%%%%%%%%%%%%%% Publisher's Area please ignore %%%%%%%%%%%%%%%%%%%%%%%
%
\catchline{}{}{}{}{}
%
%%%%%%%%%%%%%%%%%%%%%%%%%%%%%%%%%%%%%%%%%%%%%%%%%%%%%%%%%%%%%%%%%%%%%%%%%%

\title{Hitting Time of Rapid Intensification Onset \\in Hurricane-like Vortices}

% tentatively alphabetical order

\author{Wai-Tong (Louis) Fan
%\footnote{Typeset names in 8 pt Roman. Use the footnote to indicate the present or permanent address of the author.}
}
\address{Department of Mathematics, Indiana University\\
Bloomington, IN 47405, USA\\
%\footnote{State completely without abbreviations, the affiliation and mailing address, including country. Typeset in 8 pt Times italic.}\\
waifan@iu.edu}

\author{Chanh Kieu}
\address{Department of Earth and Atmospheric science, Indiana University\\
Bloomington, IN 47405, USA\\
ckieu@indiana.edu}

\author{Dimitrios Sakellariou}
\address{Department of Mathematics, Indiana University\\
Bloomington, IN 47405, USA\\
disakell@iu.edu}

\author{Mahashweta Patra}
\address{Department of Earth and Atmospheric science, Indiana University\\
Bloomington, IN 47405, USA\\
mpatra@iu.edu}

\maketitle

\begin{history}
\received{(Day Month Year)}
\revised{(Day Month Year)}
%\accepted{(Day Month Year)}
\comby{(xxxxxxxxxx)}
\end{history}

\begin{abstract}
Predicting tropical cyclone (TC) rapid intensification (RI) is an important yet challenging task in current operational forecast due to our incomplete understanding of TC nonlinear processes. This study examines the variability of RI onset, including  the probability of RI occurrence and the timing of RI onset, using a low-order stochastic model for TC development. Defining RI onset time as the first hitting time in the model for a given subset in the TC-scale state space, we quantify the probability of the occurrence of RI onset and the  distribution of the timing of RI onset for a range of initial conditions and model parameters. Based on  asymptotic analysis for stochastic differential equations, our results show that RI onset occurs later, along with a larger variance of RI onset timing, for weaker vortex initial condition and stronger noise amplitude. In the small noise limit, RI onset probability approaches one and the RI onset timing has less uncertainty (i.e., a smaller variance), consistent with observation of TC development under idealized environment. Our theoretical results are verified against Monte-Carlo simulations and compared with explicit results for a general 1-dimensional system, thus providing new insights into the variability of RI onset and helping better quantify the uncertainties of RI variability for practical applications.
\end{abstract}

\keywords{Stochastic processes; Hitting times; Scaling limits; Tropical cyclones; Rapid Intensification}
\ccode{AMS Subject Classification: 60G65, 60G40, 76U05}
%
% section 1
%
\section {Introduction}
Rapid intensification (RI) is an inherent feature of hurricane (also known as tropical cyclone or TC) dynamics by which a TC intensifies quickly in a very short period of time \footnote{Practically, RI is defined as a change of the maximum 10-m wind of 30 kt (~15 ms$^{-1}$ in 24 hours).} Predicting RI onset is therefore of great importance in operational TC forecast such that proper and timely risk management and preparation can be initiated \cite{KaplanDemaria2003,Sampson_etal2011,Rappaport_etal2012,Fisher_etal2019}.

While RI is an inherent property of TC development that is guaranteed to occur under idealized environment, the probability or the exact moment that RI onset takes place in real-time forecast highly fluctuates as a result of varying environmental conditions \cite{KowchEmanuel2015,NguyenChanhFan}. Despite some progress in improving TC intensity forecast skill, RI prediction has been challenging. As shown in, e.g., \refcite{Kaplan_etal2015,Tallapragada_etal2015,Tallapragada_etal2014a,Rozoff_etal2015,Yang2016}, current operational models still have a high false alarm rate and a moderate  probability of detection for RI prediction, even at a short 24-36 hour lead time. The RI forecast skill is significantly deteriorated as the forecast lead time is extended longer, making it hard to reliably predict RI in real-time applications. With various uncertainties in TC intensity fluctuation related to vortex initial conditions, model errors, boundary conditions as well as potential existence of TC intensity chaotic dynamics and random variability \cite{majda1999models,KowchEmanuel2015,KieuMoon2016,NguyenChanhFan}, it is necessary to examine to what extent RI onset can be best predicted for future operational applications and model improvement.     

From the practical perspective, TC development is an intrinsically random process that can never be fully controlled due to the stochastic nature of the atmosphere. Naturally, one then expects RI onset to be impacted by such random variability from the atmosphere, especially during the early stage of TC development that possesses high uncertainties in both the structure and strength. Figure \ref{fig1} shows an example of TC intensity evolution obtained from the Coupled Ocean Atmospheric Prediction System (COAMPS-TC) model, using an ensemble of simulations with small random noises centered on a given initial condition \cite{Kieu_etal2021}. One notices  that the RI onset timing in this ensemble, defined to be the first moment in the model simulation that the maximum surface wind (Vmax) increases by 14.5 $m\;s^{-1}$ (30 kt) per 24 hr, is not a deterministic variable but varies significantly, regardless of how perfect environmental conditions are. 
%
% Figure 1
%
\begin{figure}[tbh]
\centering
\includegraphics[scale=0.5]{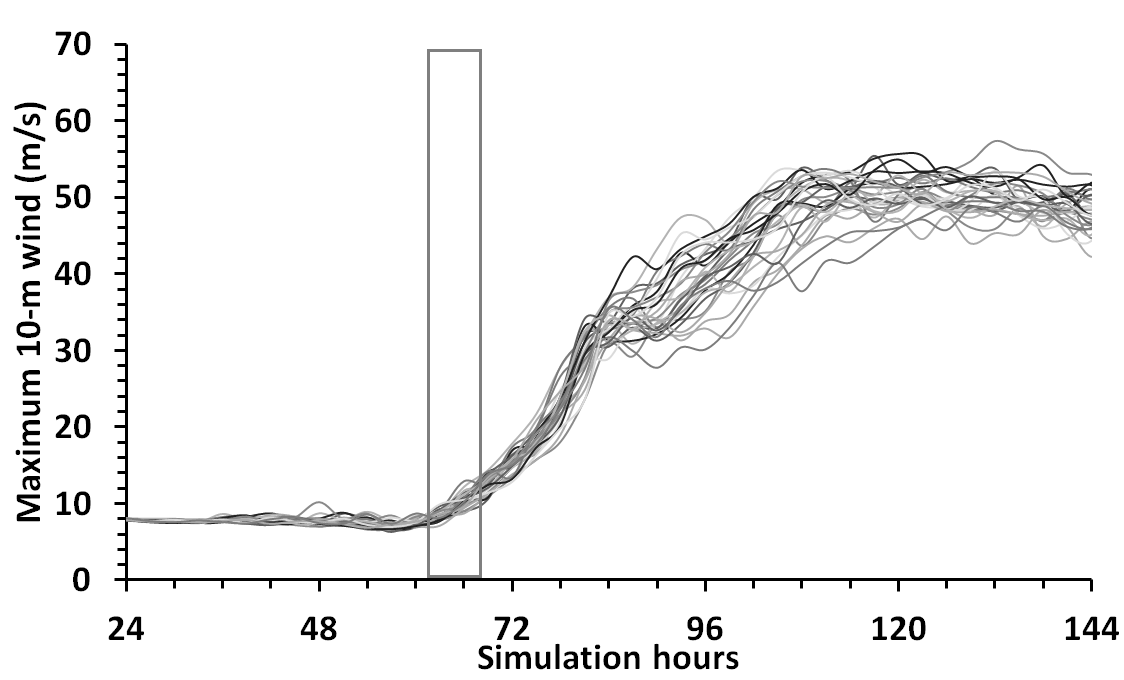}
\caption{Time series of the maximum 10-m wind (VMAX) ensemble during the course of idealized simulations using the COAMPS-TC model, under a perfect model scenario. The ensemble is perturbed by small random perturbations at the initial condition as presented in \protect\refcite{Kieu_etal2021}. The gray box denotes the interval at which RI onset time varies among different ensemble members. Here, the RI onset moment is defined as the time into simulation that the VMAX change in the next 24 hours is $\ge 14.5 \,ms^{-1}$.}

\label{fig1}
\end{figure}

The above variation of RI onset timing as illustrated by the COAMPS-TC model due to random noise is in fact just one among many other possible sources of uncertainties related to, for example, boundary and surface layer paramterization, model physics, vortex initial conditions and locations, or potential existence of TC chaotic dynamics \cite{NavarroHakim2016,Rasp_etal2018,Bryan_etal2017,KieuMoon2016,Kieu_etal2018}. The combination of all these uncertainties apparently indicates that RI onset should not be treated as a deterministic but more as a stochastic process. How to  quantify the probability of RI onset as well as its timing are, nevertheless, open questions in the current TC research. 

In this paper, based on asymptotic hitting analysis for stochastic differential equations (SDE), we study the probability of RI onset as well as the variability of RI onset timing. Our objective is to examine RI onset under %the most 
idealized conditions such that intrinsic random characteristics of RI onset in the absence of all environmental asymmetries can be investigated. For this purpose, the \textit{first hitting time} (also known as the \textit{first passage time}) technique for stochastic processes appears to be appropriate and beneficial due to its connection with stochastic analysis.
%most suitable because of its approach and relevance. 
Defined as the moment when a stochastic process first visits a given subset in the state space, the first-hitting time can be directly linked to RI onset time from which the first-hitting time techniques can be applied to study the variability of RI onset as expected. 

%A key technical result that we will base upon is  Theorem 1 of \refcite{monter2010scaling}, which provides an  estimation for the joint distribution of the first hitting time for a small-noise stochastic diffusion equation. 

To the best of our knowledge, this approach and its applications to TC development have not been previously explored. As such, we wish to present in this study a theoretical framework that could allow one to rigorously quantify the probability of RI onset as well as the variability of RI onset timing as a function of the ambient environment and TC initial conditions.

%We note that for a deterministic system, this hitting time is nothing but an interval during which an initial state $\epsilon$ could evolve and approach a fix size $\Gamma$ and so $\tau \approx \frac{1}{G}ln\frac{\Gamma}{\epsilon}$  (see, e.g., Bakhtin and Chen 2019 \cite{bakhtin2019long}). Applying this hitting time definition for RI onset would require a specific stochastic model for TC development. Thus, 

The rest of this study is organized as follows. In the next section, a stochastic model for TC development is presented, followed by a formal definition of RI onset within the first hitting-time framework. Section 3 presents theoretical results for the probability of RI occurrence and the distribution of RI onset time. Monte-Carlo simulations to verify our theoretical results will be provided in Section 4, along with additional insights on the dependence of RI onset on model parameters. Concluding remarks are given in the final section.
%
% section 2
%
\section{Formulation}
\subsection {Stochastic model for TC intensification}
Under the axisymmetric assumption for TC development, Kieu and Wang (2017, \cite{KieuWang2017a}) presented a simple low-order model that is based on a few fundamental scales of TCs. Unlike common TC balance models, this TC-scale dynamics, which is a modified version of a TC model originally proposed by \refcite{Kieu2015} and is hereinafter referred to as the MSD model, is time-dependent and explicitly contains the maximum potential intensity limit as one of its critical points. In the nondimensional form, the MSD system in \refcite{KieuWang2017a} can be summarized as follows
\begin{equation}\label{MSD}
\begin{cases}
\frac{du}{dt}=pv^2-(p+1)b - u|v|\\
\frac{dv}{dt}=-uv - v |v|\\
\frac{db}{dt}=bu+su+|v|-rb.
\end{cases},
\end{equation}
where $(u,v,b)$ denote non-dimensional variables that represent the maximum radial wind, the maximum tangential wind, and the warm core anomaly in the TC inner-core region. The parameter $p$ is proportional to the squared ratio of the depth of the troposphere over the depth of the boundary layer, $s$ is an effective tropospheric static stability parameter, and $r$ represents the Newtonian cooling. Detailed derivation of this TC-scale system under the assumption of wind induced surface heat exchange (WISHE) feedback can be found in \cite{KieuWang2017a}.

Because of the dependence of frictional forcing and the WISHE feedback on the wind amplitude, it should be noted that the absolute sign in Eq. \eqref{MSD} results in two possibilities for TC development corresponding to cyclonic and anticyclonic systems. To ease our subsequent analyses, we will focus only on the regime in the state space where $v>0$, which corresponds to cyclonic TCs in the Northern Hemisphere. This cyclonic system will be hereinafter explicitly referred to as an MSD$_+$ system (see Eqs. (69)-(71) in \refcite{KieuWang2017a}), which is described by the following equations,
\begin{equation}\label{Eg1}
\begin{cases}
\frac{du}{dt}=pv^2-(p+1)b-uv\\
\frac{dv}{dt}=-uv-v^2\\
\frac{db}{dt}=bu+su+v-rb
\end{cases}.
\end{equation}
%Similarly the anticyclonic regime where $v<0$ is referred to as an MSD$_{-}$ system, which is given by:
%\begin{equation}\label{Eg2}
%\begin{cases}
%\frac{du}{dt}=G_1(u,v,b)=pv^2-(p+1)b + uv\\
%\frac{dv}{dt}=G_2(u,v,b)=-uv + v^2\\
%\frac{db}{dt}=G_3(u,v,b)=bu+su-v-rb.
%\end{cases},
%\end{equation}
To simplify our notation, we write this MSD$_+$ system  in the form
\begin{equation}\label{ODE_MSD}
\frac{dx(t)}{dt} = \mu\big(x(t)\big),\quad t \geq 0, 
\end{equation}
where $x(t) \equiv (u(t), v(t), b(t))$, and the vector field $\mu=(\mu_1,\mu_2,\mu_3):\,\R^3 \to \R^3$ is the forcing function of \eqref{Eg1}, i.e., 

\begin{equation}\label{SDE_UVB3abc}
\begin{cases}
    \mu_1(u,v,b)=&\,pv^2-(p+1)b - u v \\
    \mu_2(u,v,b)=&\,-uv-v^2\\
    \mu_3(u,v,b)=&\,bu+su+v-rb
\end{cases}.
\end{equation}

%$\mu(u,v,b)=\big(pv^2-(p+1)b - u\abs{v},\;-uv-v\abs{v},\; bu+su+\abs{v}-rb\big)$. 
While the low-order MSD system is admittedly simple as compared to real TCs, the fact that the dynamics of TC development can be formulated in such a mathematically closed form is critical here. This is because this MSD system  allows one to obtain different insights  into the underlying mechanisms of TC development beyond numerical simulations by full-physics models that one cannot fully control. 

Given the above deterministic model \eqref{Eg1} for TC development, we next extend it to a stochastic system. Following \refcite{NguyenChanhFan}, stochastic forcing is introduced to the MSD system as an additive Wiener process. Specifically, we consider the stochastic process $X_t:=(U_t,V_t,B_t)$ solving the time-homogeneous It\^o stochastic differential equation as follows
\begin{equation}\label{SDE_UVB}
dX_t=\mu(X_t)\,dt +\epsilon\,dW_t,  \quad t \geq 0, 
\end{equation}
where $\epsilon>0$ is a constant (the diffusion coefficient) that parametrizes the magnitude of the fluctuation of the random forcing, and $W$ is a standard $3$-dimensional Wiener processes. Explicitly,
\begin{equation}\label{SDE_UVB2}
\begin{cases}
dU_t= (p V_t^2-(p+1)B_t-U_t V_t)\,dt \,+ \epsilon\, dW^{(u)}_t\\
dV_t=(-U_tV_t-V^2_t)\,dt \,+ \epsilon\, dW^{(v)}_t\\ 
dB_t= (B_t U_t+ s U_t +V_t-rB_t)\,dt \,+ \epsilon\, dW^{(b)}_t
\end{cases},
\end{equation}
where $\{W^{(u)},\,W^{(v)},W^{(b)}\}$ are independent Wiener processes. The use of these independent Wiener processes to represent the random forcing for the MSD system significantly simplifies the problem both theoretically and numerically. For example, a numerical solution to equation \eqref{SDE_UVB2} with a sufficiently small discretization time step $\Delta t$ can be obtained by using the simple Euler-Maruyama scheme in which a Gaussian random variable with variance $(\Delta t) \epsilon^2$ is added to each state variable in every iteration \cite{budhiraja2017uniform,NguyenChanhFan}. Figure \ref{fig2} shows an illustration of numerical simulations of the MSD system \eqref{SDE_UVB2} for 50 different realizations, using the same method and parameters as in \refcite{NguyenChanhFan}. One notices apparently from this result that the MSD system displays RI for many realizations, while a few realizations quickly decay. For those that display RI, notice also that the RI onset timing varies as well (see the crosses in Figure \ref{fig2}). How the probability of RI occurrence and its related variability depend on model parameters or vortex initial conditions is the main question we wish to tackle herein. The closed form of the MSD system as given by \eqref{SDE_UVB2} is in this regard very noteworthy, as one can employ powerful mathematical tools such as stochastic calculus and asymptotic analysis to study the It\^o SDE \eqref{SDE_UVB2}. The rigorous meaning and the well-poseness of these equations can be found in standard textbooks such as \refcite{MR1121940} and will not be discussed further here.  
%
% Figure 2
%
\begin{figure}[tbh]
\centering
\includegraphics[scale=0.4]{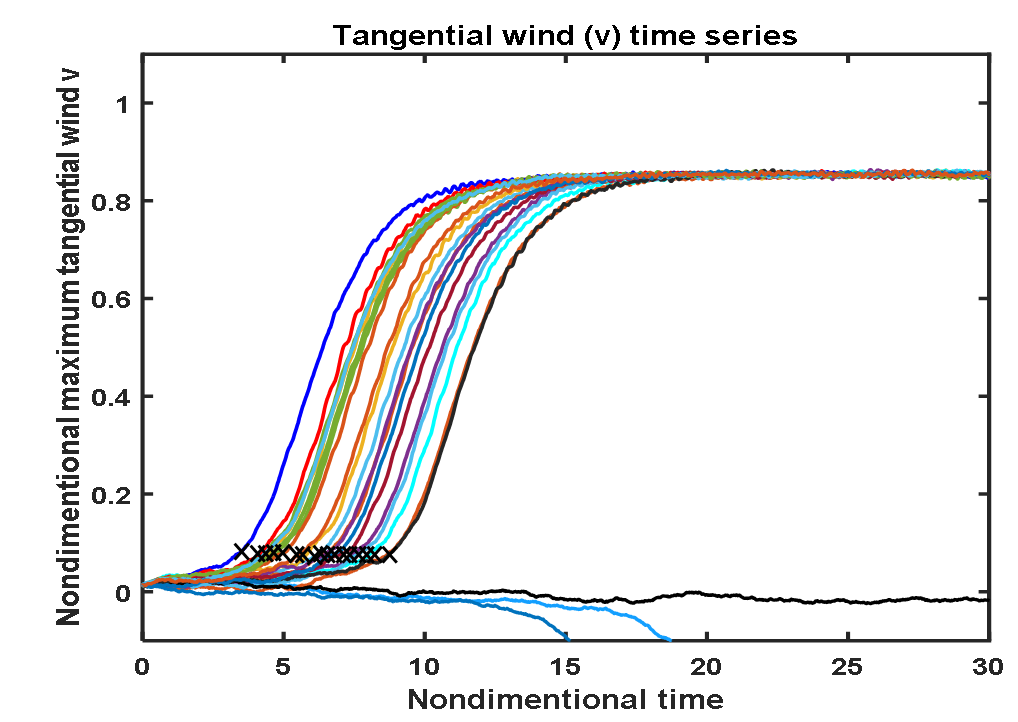}
\caption{
Time series of 30 realizations of the $v$ component as obtained from numerical integration of the MSD system, using the same set of parameters as in \protect\refcite{NguyenChanhFan}. The crosses denote the RI onset moment, which is defined as the first time into simulation that $v$ component rapidly changes similar to that in Figure 1.}
\label{fig2}
\end{figure}

\subsection{RI onset definition}
To assess the occurrence (and failure) of RI onset and to quantify the distribution of RI onset timing, it is necessary to introduce a formal definition of RI onset such that theoretical analyses can be obtained. Given the previous studies on the first-hitting time for stochastic systems, we herein define RI onset time as the first moment that the $V$ component reaches a given level $\ell\in (0,\infty)$ afterward RI will certainly occur. Previous analyses of the MSD system showed that such an RI onset level always exist \cite{Kieu2015}, because the MSD system contains a single stable point at the maximum potential intensity limit (cf. Figures \ref{fig1} or \ref{fig2} and \cite{NguyenChanhFan}). As such, when $V$ reaches the level $\ell$, TC  intensification is ensured to rapidly approach the potential intensity state, thus justifying our definition of RI onset here. 

\noindent Given the above definition of RI onset time, we now investigate the following two specific questions: 
\begin{description}
\item[1. Probability of RI onset occurrence:] whether or not the maximum tangential wind (i.e., $v$) will reach the level $\ell$ such that RI onset can occur, starting from a given initial condition; and
\item[2. Variability of RI onset timing:] if RI occurs, what is the statistical distribution of RI onset time?
\end{description}
To be more specific for our subsequent analyses, we introduce the following hitting times for the stochastic MSD system \eqref{SDE_UVB2}:
\begin{align}
    \tau_+&:=\inf\{t\geq 0:\; V_t =\ell\} \quad\text{the first time when $V$ reaches }\ell. \label{tau+}\\
    \tau_0&:=\inf\{t\geq 0:\; V_t=0\} \quad\text{the first time when $V$ crosses zero level} \notag\\
    & \qquad\qquad \qquad \qquad \qquad \quad\text{(i.e., an initial vortex dies out).} \label{tau0}
\end{align}
From the above definitions, the \textit{RI onset time} for the SDE \eqref{SDE_UVB2} is $\tau_+$. Furthermore, we say that \textit{RI onset occurred }if $\tau_+<\tau_0$. That is, when the trajectory of $v$ hits $\ell$ without dying out before then.  The condition $\tau_+ <\tau_0$ is needed here, any tropical disturbance hitting the level $v=0$ will be considered as being dissipated and so there is no RI for this vortex development in reality.

Due to the stochastic nature of the TC stochastic dynamics, it is apparent that $\tau_+$ is a random variable. As such, our aim is to obtain the probability  $\mathbb{P}_{x_0}(\tau_+<\tau_0)$ as a function of the initial condition $x_0=(u_0,v_0,b_0)$ and the parameters of the MSD system \eqref{SDE_UVB2}. Since the initial point can be any point in the state space, we define the function
\begin{equation}\label{Def:px}
    p(x):=\mathbb{P}_{x}(\tau_+<\tau_0)
\end{equation}
and we view $x=(u,v,b)$ as a generic point in the state space interchangeably. 

For comparison between the stochastic and deterministic MSD system, one needs also a deterministic RI onset time $T$ for the ODE \eqref{Eg1}, which is defined as follows
\begin{align}\label{Def:T_v}
T:=\inf\{t\geq0 \,; v(t) = \ell\}
\end{align}
which is the first time the trajectory $(v(t))_{t\geq 0}$ of \eqref{Eg1} hits level $\ell$ in the absence of all stochastic forcings. It should be noted that one cannot choose a too large value for $\ell$, as the MSD system possesses a unique stable point $v_*$ \cite{Kieu2015,KieuWang2017a}. Thus, $v$ will be always attracted to its equilibrium $v_*$ and never reach $\ell$ if $\ell$ is set too large. Hence, it is natural to make the following mild assumption throughout the paper.
\begin{assumption}\label{A:ell}
$0< \ell < v_*$, where $v_*$ is the $v$-component of the stable critical point in the phase space $(u,v,b)$ of the ODE \eqref{Eg1}.
\end{assumption}
Under this assumption, $T$ is finite (i.e. the $v$-component of the ODE must hit level $\ell$) if the initial condition $v_0 >0$ in contrast to the hitting time in a SDE system. That is, the $v$-trajectory for the ODE never hits zero so long as the initial value $v_0$ is positive as proven in \refcite{KieuWang2017a}.
%
% Section 3
%
\section{Theoretical results}\label{S:Theoretical}
In this section, we present rigorous analyses of the RI onset probability $p(x)=\mathbb{P}_x(\tau_+<\tau_0)$ for the stochastic model \eqref{SDE_UVB2}, along with the conditional probability distribution of $\tau_+$.
%Numerical verification for these theoretical results will be presented in Section 4. 
To obtain further analytic insight, the $v$-component of \eqref{SDE_UVB2} will be compared with the behavior of a general 1-dimensional system in Section \ref{S:1-dim}.

Recall from our Definition \eqref{tau+} of RI onset time that $\tau_+$ is the first time for $V$ to reach level $\ell$. Thus, we will apply asymptotic techniques for SDE to estimate the hitting time in the MSD model \eqref{SDE_UVB2}.
Note first that Eq. \eqref{SDE_UVB2} has a unique strong solution $X=(X_t)_{t\geq 0}$, where $X_t=(U_t,V_t,B_t)$ is a three-dimensional vector for each time $t\geq 0$. Furthermore, $X$ is a continuous-time strong Markov process with  infinitesimal generator $\mathcal{L}$ defined by the following differential operator
\begin{equation}\label{L1;Generator}
    \mathcal{L}f(x)=
  \mu_1\frac{\partial f}{\partial u}+
    \mu_2\frac{\partial f}{\partial v}+
    \mu_3\frac{\partial f}{\partial b}+ \frac{\epsilon^2}{2}  \Bigg(\frac{\partial^2 f}{\partial u^2}+\frac{\partial^2 f}{\partial v^2}+\frac{\partial^2 f}{\partial b^2}\Bigg),
    %=\sum_{i=1}^3 \mu_i(x)\frac{\partial f}{\partial x_i}(x)+ \frac{\epsilon^2}{2} \sum_{i=1}^3 \frac{\partial^2 f}{\partial x_i^2}(x),
\end{equation}
where $x=(u,v,b)$ is a generic point in the state space of the SDE. Let $p(x,t)$ be the probability density for $X_t$; that is, $\P(X_t\in dx)=p(x,t)dx$ where $dx$ is the Lebesque measure in $\R^d$. Then $p(x,t)$ satisfies the Fokker–Planck equation
$\frac{\partial p(x,t)}{\partial t} = \mathcal{L}^*p(x,t)$,
where $\mathcal{L}^*$ is the adjoint of $\mathcal{L}$ in the Hilbert space $L^2(dx)$. %, acting on the $x$ variable. %Explicitly, the Fokker–Planck equation is
%\begin{equation}\label{FPeqt}
%\frac{\partial}{\partial t}p(x,t)
%=-\sum_{i=1}^3 \frac{\partial}{\partial x_i} \left[ \mu_i(x) p(x,t) \right] + \frac{\epsilon^2}{2} \sum_{j=1}^{3} \frac{\partial^2}{\partial x_i^2} p(x,t).
%\end{equation}
These facts follow from standard techniques in stochastic calculus as can be seen, for instance, in \cite[Chapter 5]{MR1121940}. 

%In principle, \eqref{L1;Generator} enables one to obtain all desired statistics of RI onset time, if it can be solved explicitly. However, the complex nonlinearity of the partial differential equation (PDE) \ref{L1;Generator} in terms of the generator $\mathcal{L}$ prevents one from obtaining analytical solutions of this equation. 
In principle, \eqref{L1;Generator} enables one to obtain all desired statistics of RI onset time. However, due to nonlinearity of the SDE there is no explicit formula for the density $p(x,t)$. In the next two subsections, we shall therefore derive formal results for the probability of RI onset and the distribution of $\tau_+$ in the asymptotic limit of small stochastic forcing. These formal connections between SDE and PDE are the starting point of more in-depth analysis of the statistics of RI onset time that we can later verify by Monte-Carlo simulations. 
\begin{remark}[Implicit dependence parameters ]\rm Note that the process $X=(U,V,B)$, the generator $\mathcal{L}$, the onset time $\tau_+$, and the extinction time $\tau_0$ all depend on the noise parameter $\epsilon$ and the MSD model parameters $(p,r,s)$. %In all herein analyses, we will focus on the asymptotic behaviors when $\epsilon\to 0$. 
These dependence are made implicit to simplify notation.
\end{remark}

\subsection{Probability of RI onset}\label{S:probOnset}
In the case when the initial value of $V$ is positive (i.e. $V_0>0$), it is possible to obtain a simplification for the MSD$_{+}$ system based on the fact that RI onset would not occur if $V$ hits zero level or becomes negative (i.e., an anticyclonic vortex). We therefore begin with the following lemma that expresses the probability of RI onset occurrence. That is, the probability that $V$ reaches a prescribed level $\ell>0$ without dying out. Practically, this probability indicates the development of a cyclonic vortex ($v>0$) instead of anticyclonic vortex ($v<0$), given that the initial state of the vortex is cyclonic in the Northern hemisphere (or an anticyclonic vortex from an initial anticyclonic state in the Southern hemisphere).

\begin{lemma}[Probability of RI onset]\label{Lm:HittingProb}
%Consider the set $D=\{(u,v,b):\;0<v<\ell\}$, and the boundary $\partial D =\mathcal{M}^{+\ell} \cup \mathcal{M}^{0}$.
Let $p(x)= \P_x(\tau_+<\tau_0)$ be the probability of the RI onset occurrence when an initial state of the SDE \eqref{SDE_UVB2} is $x=(u,v,b)$. Then $p$ satisfies the following boundary value problem
\begin{equation}%[left=\empheqlbrace]
\label{Hitting Probabilities}
\begin{cases}
    \mathcal{L}p(x)&=0 \qquad \text{if}\quad 0<v<\ell\\
    p(x)&=1 \qquad \text{if}\quad v=\ell\\
    p(x)&=0 \qquad \text{if}\quad v=0
\end{cases},
\end{equation}
where $\mathcal{L}$ is the operator \eqref{L1;Generator}.
\end{lemma}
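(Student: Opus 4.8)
The plan is to recognize $p$ as an $\mathcal{L}$-harmonic function determined by the strong Markov property, reading the two boundary conditions in \eqref{Hitting Probabilities} directly off the definitions \eqref{tau+}--\eqref{tau0}. For the boundary values, note that paths of $X$ are continuous and $\ell>0$. If the initial point $x$ has $v=\ell$, then $\tau_+=0$ while $\tau_0>0$ (since $V_0=\ell\neq 0$), so the event $\{\tau_+<\tau_0\}$ holds surely and $p(x)=1$. Symmetrically, if $v=0$ then $\tau_0=0<\tau_+$, the event fails, and $p(x)=0$. This settles the second and third lines, and it remains to establish the interior equation.

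For the interior, fix $x$ with $0<v<\ell$ and set $\tau:=\tau_+\wedge\tau_0$, the first exit time of $V$ from the open interval $(0,\ell)$. The core step is the mean-value identity $p(x)=\E_x\big[p(X_{h\wedge\tau})\big]$ for every $h>0$, equivalently that $\big(p(X_{t\wedge\tau})\big)_{t\ge 0}$ is a martingale. This follows from the strong Markov property: on $\{\tau>h\}$ one has $\E_x[\mathbf{1}_{\{\tau_+<\tau_0\}}\mid\mathcal{F}_h]=\P_{X_h}(\tau_+<\tau_0)=p(X_h)$ since $X_h$ still lies in the open slab, whereas on $\{\tau\le h\}$ the process has already reached $v=\ell$ or $v=0$, where $p$ equals $1$ or $0$ in exact agreement with whether the event has been decided. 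Taking expectations over both cases yields the identity.

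Next I would apply Dynkin's formula to the function $p$, which gives
\[
\E_x\big[p(X_{h\wedge\tau})\big]=p(x)+\E_x\!\left[\int_0^{h\wedge\tau}\mathcal{L}p(X_s)\,ds\right].
\]
Combined with the martingale identity, this forces $\E_x\big[\int_0^{h\wedge\tau}\mathcal{L}p(X_s)\,ds\big]=0$ for all $h>0$. Since $x$ lies in the open slab, $\tau>0$ almost surely; dividing by $h$, letting $h\downarrow 0$, and using continuity of $s\mapsto\mathcal{L}p(X_s)$ at $s=0$ then yields $\mathcal{L}p(x)=0$, which is the first line of \eqref{Hitting Probabilities}.

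The main obstacle is the regularity required to legitimately invoke Dynkin's formula, namely that $p\in C^2$ on the open slab $\{0<v<\ell\}$. This is exactly where the non-degeneracy of the noise enters: because the diffusion matrix of \eqref{SDE_UVB} is $\tfrac{\epsilon^2}{2}\,I_3$, the generator $\mathcal{L}$ is uniformly elliptic, so standard interior elliptic regularity (see, e.g., \cite[Chapter 5]{MR1121940}) upgrades the probabilistically-defined $p$ to a classical solution in the interior. A secondary point is that the slab is unbounded in the $u$ and $b$ directions; this poses no difficulty for the local computation above, since all arguments are carried out up to the finite stopping time $h\wedge\tau$ and never require $\tau<\infty$.
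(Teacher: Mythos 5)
Your argument is correct, and it shares the paper's central tool --- Dynkin's formula applied at the exit time $\tau=\tau_+\wedge\tau_0$ of the slab $\{0<v<\ell\}$ --- but it runs the logic in the opposite direction. The paper's (sketched) proof is a \emph{verification} argument: it takes for granted that the boundary value problem \eqref{Hitting Probabilities} has a unique classical solution $f$, applies Dynkin's formula to that $f$ so that the integral term vanishes, reads off $\E_x f(X_\tau)=f(x)$, and identifies $\E_x f(X_\tau)$ with $\P_x(\tau_+<\tau_0)$ via the boundary values; the burden is thus placed on well-posedness of the PDE. You instead start from the probabilistically defined $p$, establish the boundary values and the mean-value (martingale) identity $p(x)=\E_x[p(X_{h\wedge\tau})]$ from the strong Markov property, and then differentiate at $h=0$ through Dynkin's formula to conclude $\mathcal{L}p=0$; the burden is placed on interior $C^2$ regularity of $p$, which you correctly discharge by uniform ellipticity of $\mathcal{L}$ (the diffusion matrix is $\tfrac{\epsilon^2}{2}I_3$). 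Your route has the advantage of not presupposing uniqueness for the BVP and of making explicit exactly where the non-degenerate noise is used; the paper's route is shorter when existence and uniqueness are taken as standard. One small point of rigor you may wish to add: since the slab is unbounded in $u$ and $b$, the application of Dynkin's formula to $p$ should be localized by intersecting $h\wedge\tau$ with the exit time of a large ball around $x$, so that $p$ and its derivatives are evaluated only on a compact set; this does not affect the limit $h\downarrow 0$.
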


\begin{proof}%[Proof of Lemma \ref{Lm:HittingProb}]
The proof is standard and we give a sketch to illustrate the key idea.
%By the definition of $p$ and the uniqueness of solution to \eqref{Hitting Probabilities}, it is clear that the boundary conditions hold. 
%That is, when $v=\ell$ the probability of onset is 1, so $p(x)=1$. 
Recall \eqref{tau+}-\eqref{tau0} and define $\tau:= \min\{\tau_+,\, \tau_0\}$ %to be the first time for the $v$  component to exit the interval $(0,\ell)$. %, i.e. $\tau=\inf\{t\geq 0: \; V_t \notin (0,\ell)\}$. 
Clearly, $\{\tau_+<\tau_0\}=\{X_{\tau}=\ell\}$. Hence $p(x)=\P_x(X_\tau= \ell)$.
The random times $\tau_+$, $\tau_0$ and $\tau$ are stopping times with respect to the filtration  generated by $X$. 
Hence by the Dynkin's formula (see \cite[Chapter 2]{MR838085}),
\begin{equation}\label{Dynkin}
\E_xf(X_\tau)= f(x) + \E_x\Bigg[\int_0^\tau \mathcal{L}f(X_s) \,ds \Bigg]
\end{equation}
for all bounded functions $f$ in the domain of $\mathcal{L}$. From this and the fact that  \eqref{Hitting Probabilities} has unique solution, we can check that $p$ is the solution to  \eqref{Hitting Probabilities}.
\end{proof}

It should be noted that if the starting point $x_0=(u_0,v_0,b_0)$ is fixed (i.e., does not depend on $\epsilon$) and that $v_0>0$, then the probability of RI onset will tend to 1 as  $\epsilon\to 0$. This is because the ODE starting with $v_0>0$, which corresponds to the case $\epsilon=0$ (i.e., no random fluctuation), always hit level $\ell$ %as $\epsilon \rightarrow 0$ (i.e., no random fluctuation), 
under the assumption $0<\ell<v_*$ (i.e. Assumption \ref{A:ell}. See also \refcite{KieuWang2017a}). By \cite[Lemma 5]{monter2010scaling} and Assumption 1, 
\begin{equation}\label{epsto0_1}
 \lim_{\epsilon\to 0} p(x_0)=1 \quad\text{for all }x_0=(u_0,v_0,b_0)\in \mathbb{R}\times (0,\ell]\times \mathbb{R}.
\end{equation}
This asymptotic behavior will be verified by our Monte-Carlo simulation to be presented in the next section (cf. Figure \ref{Histogram}), which shows indeed that the probability of RI onset increases to 1 when $\epsilon\to 0$ for each initial point  $x_0$.

\subsection{Distribution of RI onset time}
Assume that RI onset occurs, the next question one wishes to examine is how the RI onset time $\tau_+$ depends on the model initial conditions or parameters. 
%We start with a general proposition about conditioned diffusion that allows us to understand the distribution of $\tau_+$.    
%As will be shown later from our Monte-Carlo simulation (cf. Figure \ref{Probability1}), $p(x)=p(u,v,b)$ tends to be an increasing function of $v$ (when $u$ and $b$ are fixed) such that $\frac{\partial p}{\partial v}$ is positive. Hence it is conceivable that $\mu_{\epsilon}$ is larger than $\mu$ from \ref{Def:F_epsilon_3D} in the second component and that the conditioned diffusion $\widetilde{X}$ has a larger drift towards the RI onset level $\{v=\ell\}$, leading to a \textit{shorter} RI onset time. We do not have a proof of the previous statement, but we offer Proposition \ref{prop:condiffusion_1D} for such precise statement for a 1-dimensional analogue.
For this, we can examine the Cumulative Distribution Functions (CDF) of $\tau_+$, conditioned on the occurrence of RI onset. 

Precisely, we let $F_{\text{on}}(t,x):=\mathbb{P}_x(\tau_+ \leq t\,|\,\tau_+<\tau_0)$ be the CDF of $\tau_+$, under the condition that RI onset occurs and the SDE \eqref{SDE_UVB2} starts from an initial value $x$. Let $p(x)=\mathbb{P}_x(\tau_+<\tau_0)$ be the probability of RI onset as in Lemma \ref{Lm:HittingProb}, then
\begin{equation}\label{Def:Fon}
F_{\text{on}}(t,x)=\frac{\mathbb{P}_x(\tau_+ \leq t\,,\,\tau_+<\tau_0)}{p(x)}.%=\frac{g(x)-\mathbb{P}_x(\tau_+ > t\,,\,\tau_+<\tau_0)}{g(x)}.
\end{equation}
In Lemma \ref{L:Ponset2} below, we obtain the numerator of \eqref{Def:Fon}, and therefore $F_{on}$.
\begin{lemma}[Distribution of RI onset time]\label{L:Ponset2}
Let 
%$F_{\text{on}}(t,x):=\mathbb{P}_x(\tau_+ \leq t\,|\,\tau_+<\tau_0)$ and
$G(t,x):=p(x)F_{\text{on}}(t,x)=\mathbb{P}_x(\tau_+ \leq t\,,\,\tau_+<\tau_0)$. Then $G$ satisfies the boundary value problem
\begin{align}%[left=\empheqlbrace]
\frac{\partial G(t,x)}{\partial t}&= \mathcal{L}G(t,x) , \qquad t\in(0,\infty),\; x\in\{(u,v,b):\;0<v<\ell\} \label{PDE_Ha}\\
G(0,x)&=0 , \qquad\qquad \quad \qquad  \qquad \quad x\in \{(u,v,b):\;0<v<\ell\} \label{PDE_Hb}\\
G(t,x)&=0 , \qquad\qquad \quad t\in(0,\infty),\;x\in  \{(u,0,b),\,(u,\ell,b)\} \label{PDE_Hc},
\end{align}
where
$\mathcal{L}$ is the operator \eqref{L1;Generator}.
\end{lemma}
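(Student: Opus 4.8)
The plan is to reproduce the Dynkin/Feynman--Kac argument of Lemma~\ref{Lm:HittingProb}, but now carrying the extra time variable so that the elliptic problem \eqref{Hitting Probabilities} is replaced by a parabolic one. With $\tau:=\min\{\tau_+,\tau_0\}$ as before, I would first record the set identity $\{\tau_+\le t,\ \tau_+<\tau_0\}=\{\tau\le t,\ V_\tau=\ell\}$, which holds because the continuous process $X$ leaves the open slab $\{0<v<\ell\}$ through exactly one face. Thus $G(t,x)=\E_x[\mathbf 1_{\{\tau\le t,\,V_\tau=\ell\}}]$ exhibits $G$ as the sub-probability that the absorbed diffusion reaches the onset face by time $t$, with absorbing data $\mathbf 1_{\{v=\ell\}}$.

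Next I would build the natural martingale. Fixing $t>0$, I set $M_s:=G(t-s,X_{s\wedge\tau})$ for $s\in[0,t]$. Splitting on $\{\tau>s\}$ and $\{\tau\le s\}$ and applying the strong Markov property exactly as in the derivation of \eqref{Dynkin}, one checks that $M_s=\E_x[\mathbf 1_{\{\tau\le t,\,V_\tau=\ell\}}\mid\mathcal F_{s\wedge\tau}]$, so $M$ is a bounded martingale. The computation on $\{\tau\le s\}$ is what reads off the boundary data: there the event is already decided and equals $\mathbf 1_{\{V_\tau=\ell\}}$, so $G(t,\cdot)$ must coincide on each face with the value of its absorption target, giving the homogeneous datum on the extinction face $\{v=0\}$ recorded in \eqref{PDE_Hc} and the value one on the onset face $\{v=\ell\}$. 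The initial condition \eqref{PDE_Hb} is immediate, since for $x$ in the open slab continuity of paths gives $\tau>0$ almost surely, whence $G(0,x)=\P_x(\tau_+\le 0)=0$.

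To extract \eqref{PDE_Ha}, I would use that $\mathcal L$ is uniformly elliptic (its diffusion matrix is $\epsilon^2 I$ with $\epsilon>0$) and that the drift is smooth, so interior parabolic regularity makes $G$ of class $C^{1,2}$ on $(0,\infty)\times\{0<v<\ell\}$. It\^o's formula applied to $M_s=G(t-s,X_s)$ on $\{s<\tau\}$ then gives $dM_s=(-\partial_tG+\mathcal LG)(t-s,X_s)\,ds+\epsilon\,\nabla G(t-s,X_s)\cdot dW_s$, and since $M$ is a martingale the finite-variation part must vanish, yielding $\partial_tG=\mathcal LG$ in the interior. The converse identification---that the probabilistic $G$ is the unique solution of \eqref{PDE_Ha}--\eqref{PDE_Hc}---follows from the Feynman--Kac theorem for the parabolic Dirichlet problem together with uniqueness, the smoothness of the faces $\{v=0\}$ and $\{v=\ell\}$ guaranteeing that every boundary point is regular.

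I expect the main obstacle to be analytic rather than probabilistic. The slab is unbounded in the $u$- and $b$-directions and the drift $\mu$ has quadratic growth (hence is only locally Lipschitz), so the textbook Feynman--Kac statements do not apply verbatim. To close the argument cleanly I would localize: intersect the slab with $\{|u|<n,\ |b|<n\}$, solve and represent $G$ on each bounded domain, and pass to the limit $n\to\infty$, using the global existence of the strong solution already granted for \eqref{SDE_UVB} to control escape through the artificial boundaries. The most delicate point is verifying that the boundary values are attained continuously at the two faces; this is precisely where the non-degeneracy of the $v$-noise is essential, since it makes each face point regular for the absorbed process and prevents the trajectory from lingering on the boundary.
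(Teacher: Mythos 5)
Your overall strategy is sound and is, at bottom, the same as the paper's: both proofs rest on the strong Markov property of the diffusion absorbed on the two faces of the slab $\{0<v<\ell\}$ together with the fact that its generator is $\mathcal{L}$. The paper packages this by writing $H(t,x):=\mathbb{P}_x(\tau_+>t,\,\tau_+<\tau_0)=\int_D p(y)\,p^{ab}(t,x,y)\,dy$ in terms of the transition density $p^{ab}$ of the absorbed process and then invoking the backward Kolmogorov equation; you package it as the bounded martingale $M_s=G(t-s,X_{s\wedge\tau})$ plus It\^o's formula. These are equivalent routes to \eqref{PDE_Ha}. Your additional remarks on interior parabolic regularity, regularity of boundary points, and the need to localize because the slab is unbounded in $(u,b)$ and the drift grows quadratically are legitimate refinements that the paper does not address; they strengthen rather than change the argument.

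The one point you must resolve before the proof is complete is the boundary datum on the onset face. Your own computation (correctly) gives $G(t,x)=\mathbb{P}_x(\tau_+\le t,\,\tau_+<\tau_0)=1$ whenever $v=\ell$, since then $\tau_+=0<\tau_0$ almost surely; this contradicts \eqref{PDE_Hc}, which asserts $G=0$ on both faces, so you cannot claim to have verified \eqref{PDE_Hc} as written. The conditions as printed are in fact those satisfied by $H(t,x)=p(x)-G(t,x)$ (which vanishes on $\{v=\ell\}$ because $\tau_+=0$ there, and on $\{v=0\}$ because $\{\tau_+<\tau_0\}$ is then null), and $H$ is exactly the function the paper's own proof manipulates; since $\mathcal{L}p=0$ by Lemma \ref{Lm:HittingProb} and $p$ is time-independent, $G$ and $H$ satisfy the same equation \eqref{PDE_Ha}, so the discrepancy affects only the boundary line. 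You should either run your martingale argument for $H$ instead of $G$, or state explicitly that the datum at $v=\ell$ is $1$ rather than $0$; as it stands, your argument proves a corrected version of the lemma rather than the displayed one, and leaving that unreconciled is the only genuine gap.
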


\begin{proof}%[Proof of Lemma \ref{L:Ponset2}]
The initial condition \eqref{PDE_Hb} and the boundary condition \eqref{PDE_Hc} are clearly satisfied.
Let $H(t,x):=\mathbb{P}_x(\tau_+ > t\,,\,\tau_+<\tau_0)\,=\,p(x)-F_{\text{on}}(t,x)$.

Let  $X^{ab}$ be the absorbed diffusion \cite{bass1994probabilistic} obtained when $X$, the solution to  SDE \eqref{SDE_UVB2}, is absorbed upon hitting the boundary of the domain $D=\{(u,v,b):\;0<v<\ell\}$. 
By the strong Markov property of  $X^{ab}$,
\begin{align*}
H(t,x)=&\mathbb{P}_x(X^{ab}_t\in D,\,\tau_+<\tau_0)\\
=&\int_{D}\mathbb{P}_y(\tau_+<\tau_0)\,\mathbb{P}_x(X^{ab}_t\in dy)\\
=&\int_{D}p(y)\,p^{ab}(t,x,y)\,dy,
\end{align*}
where $p^{ab}(t,x,y)$ is the transition density of $X^{ab}$. By the backward Kolmogorov's inequality \cite[Chapter 5]{MR1121940}, we have 
$\frac{\partial H(t,x)}{\partial t}= \mathcal{L}H(t,x)$ and hence
\eqref{PDE_Ha}.
\end{proof}

Physically, Lemma \ref{L:Ponset2} informs us the probability of having an onset time no later than a time $t$ if the initial condition is $x=(u,v,b)$. Numerically, one can always solve  \eqref{PDE_Ha}-\eqref{PDE_Hc} to obtain $F_{\text{on}}(t,x)$. We can then compare the corresponding probability density function (i.e. its time-derivative $\frac{\partial F_{\text{on}}}{\partial t}$) with the histograms of RI onset statistics obtained from the numerical simulation of the MSD system \eqref{ODE_MSD} (cf. Figure \ref{Histogram}).

To obtain more quantitative insight about $\tau_+$, we establish in Theorem \ref{T:AsympDistri} below the limiting distribution of the onset time
probability density distribution for $\tau_{+}$ as $\epsilon\to 0$.
%Corollary \ref{Cor:AsympDistri} below an asymptotic formula for the variance of the probability density distribution for $\tau_{+}$ as $\epsilon\to 0$. 
Let $(u(t), v(t), b(t))_{t\geq 0}$ be the solution of the ODE \eqref{ODE_MSD}  starting at $x_0=(u_0,v_0,b_0)$ and  $T=\inf\{t\geq0 \,; v(t)=\ell\}$, we then have
\begin{theorem}[Asymptotic distribution of RI onset time]\label{T:AsympDistri}
Suppose the initial state of the SDE \eqref{SDE_UVB2} is the same as that of  the ODE \eqref{ODE_MSD}; that is, $X_0=x_0=(u_0,v_0,b_0)$. Suppose  $v_0\in (0,\ell)$ and that the onset level $\ell$ satisfies Assumption \ref{A:ell}. Then as $\epsilon \to 0$, the  random variable $\epsilon^{-1}(\tau_+ - T)$ converges in distribution to the centered Gaussian random variable with variance
\begin{align}\label{Formula_Var2}
 \frac{\Sigma_{22}(T)}{\ell^2\,[u(T)+\ell]^2},
\end{align}
where   $\Sigma(t)=(\Sigma_{ij}(t))$ is the $3\times 3$  matrix %of the centered Gaussian random vector, $\phi_0(T)$ that is 
\begin{align}\label{eq:Sigma}
\Sigma(t)= e^{\int_0^tA(r)\,dr} \cdot \left(\int_0^t e^{-\int_0^s A(r)\,dr} e^{-\int_0^s A^\intercal(r)\,dr}  \, ds\right) \cdot e^{\int_0^tA^\intercal(r)\,dr} 
\end{align}
and $A(t)=A_{x_0}(t)$ is the Jacobian matrix
\begin{align}\label{eq:matrixA}
    A(t)=D\mu(x(t))
    %=  \left[\frac{D[F_1,F_2,F_3]}{D[u,v,b]}\right](X(t))
    =
\begin{pmatrix}
 -v(t) & 2p\,v(t)-u(t)& -(p+1)\\
 -v(t) & -u(t)-2v(t)& 0\\
 b(t)+s & 1& u(t)-r
\end{pmatrix}.
\end{align}
\end{theorem}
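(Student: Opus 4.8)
The plan is to combine the classical small-noise (linear-noise) Gaussian approximation of the diffusion around the deterministic flow with a first-order ``delta method'' that converts the \emph{spatial} fluctuation of the trajectory at the crossing into the \emph{temporal} fluctuation of the hitting time. Throughout I would work on a fixed window $[0,T']$ with $T'>T$. Since $v_0\in(0,\ell)$ and Assumption~\ref{A:ell} holds, the earlier observation \eqref{epsto0_1} gives $\P_{x_0}(\tau_+<\tau_0)\to 1$ as $\epsilon\to0$; hence the conditioning on $\{\tau_+<\tau_0\}$ built into $F_{\mathrm{on}}$ is asymptotically immaterial and does not affect the limiting law of $\epsilon^{-1}(\tau_+-T)$.

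First I would set up the linearization. Writing $Z_t:=X_t-x(t)$ and Taylor-expanding the drift, $\mu(X_t)-\mu(x(t))=A(t)Z_t+o(|Z_t|)$ with $A(t)=D\mu(x(t))$ as in \eqref{eq:matrixA}, so the rescaled fluctuation $Y^\epsilon_t:=\epsilon^{-1}Z_t$ solves $dY^\epsilon_t=\big(A(t)Y^\epsilon_t+\epsilon^{-1}o(\epsilon|Y^\epsilon_t|)\big)\,dt+dW_t$. Standard functional central limit theorems for small-noise diffusions (Freidlin--Wentzell theory; see also the scaling estimates of \refcite{monter2010scaling}) then give $Y^\epsilon\Rightarrow Y$ in $C([0,T'];\R^3)$, where $Y$ is the Gaussian diffusion solving the linear SDE $dY_t=A(t)Y_t\,dt+dW_t$ with $Y_0=0$. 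Representing the solution as $Y_t=\int_0^t \Phi(t,s)\,dW_s$ with $\Phi(t,s)$ the state-transition matrix of $\dot\Phi=A\Phi$, the It\^o isometry gives $\mathrm{Cov}(Y_t)=\int_0^t \Phi(t,s)\Phi(t,s)^\intercal\,ds$, which reproduces exactly the matrix $\Sigma(t)$ of \eqref{eq:Sigma}. In particular the $v$-component $Y^{(v)}_T$ is centered Gaussian with variance $\Sigma_{22}(T)$.

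Next I would transfer this to the hitting time. The key structural input is transversality of the crossing: since $v(T)=\ell$, the $v$-equation of \eqref{Eg1} gives $\dot v(T)=\mu_2(x(T))=-\ell\big(u(T)+\ell\big)$, which is nonzero precisely when $u(T)\neq-\ell$ --- the condition making the denominator of \eqref{Formula_Var2} well defined. Using $V_t=v(t)+\epsilon Y^{\epsilon,(v)}_t$, evaluating at $t=\tau_+$, and combining $V_{\tau_+}=\ell=v(T)$ with $v(\tau_+)=\ell+\dot v(T)(\tau_+-T)+o(|\tau_+-T|)$ yields, after dividing by $\epsilon\,\dot v(T)$,
\[
\epsilon^{-1}(\tau_+-T)=-\frac{Y^{\epsilon,(v)}_{\tau_+}}{\dot v(T)}+o_{\P}(1).
\]
Since $\tau_+\to T$ and $Y^\epsilon\Rightarrow Y$ uniformly near $T$, the right-hand side converges in distribution to $-Y^{(v)}_T/\dot v(T)$, a centered Gaussian with variance $\Sigma_{22}(T)/\dot v(T)^2=\Sigma_{22}(T)/\big(\ell^2[u(T)+\ell]^2\big)$, which is exactly \eqref{Formula_Var2}.

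The hard part will be making the delta-method step rigorous, because the first-passage functional $\xi\mapsto\inf\{t:\xi(t)=\ell\}$ is not continuous on all of path space, so the continuous mapping theorem cannot be applied directly. I would handle this by exploiting transversality: on a neighborhood of $T$ the deterministic $v(t)$ is strictly monotone across $\ell$, so the first-passage map \emph{is} continuous at the deterministic path, and a sandwiching argument --- bounding $\{\tau_+\le T+\epsilon a\}$ between events of the form $\{V_{T\pm\delta}\ge\ell\}$ and $\{V_{T\pm\delta}\le\ell\}$ and using uniform closeness of $V$ to $v+\epsilon Y^\epsilon$ --- delivers both $\tau_+-T=O_{\P}(\epsilon)$ and $Y^{\epsilon,(v)}_{\tau_+}\to Y^{(v)}_T$. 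One must also rule out spurious early crossings and control the remainder $o$-terms uniformly on the $O(\epsilon)$ time scale; these are the only genuinely technical points, the remainder being the explicit covariance computation summarized above.
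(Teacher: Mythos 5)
Your proposal is correct in substance and lands on exactly the right variance, but it takes a genuinely different route from the paper. The paper does not re-derive the scaling limit: it invokes Theorem 1 of \refcite{monter2010scaling} as a black box, which directly yields the joint convergence $\epsilon^{-1}(\tau_+-T,\,X_\epsilon(\tau_+)-z)\xrightarrow{d}(-\pi_\mu\phi_0(T),\,\pi_M\phi_0(T))$ after checking smoothness of $\mu$, finiteness of $T$, and transversality of the crossing of the hyperplane $M=\{v=\ell\}$; the only computation left is the projection identity $\pi_\mu\phi_0(T)=\phi_{0,2}(T)/\mu_2(z)$ (using that $T_zM$ is the $(u,b)$-plane), together with $\mu_2(z)=-\ell\,[u(T)+\ell]$. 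Your delta-method step $\epsilon^{-1}(\tau_+-T)=-Y^{\epsilon,(v)}_{\tau_+}/\dot v(T)+o_\P(1)$ is precisely that projection identity in analytic rather than geometric clothing, and your linear-noise process $Y$ is the paper's $\phi_0$ with the same covariance $\Sigma(t)$. What your approach buys is transparency and self-containedness --- one sees exactly where the factor $\dot v(T)^2=\ell^2[u(T)+\ell]^2$ comes from, and you correctly flag that transversality amounts to $u(T)\neq-\ell$, a point the paper asserts without comment. What it costs is that the hardest step --- the rigorous continuity/sandwiching argument for the first-passage functional at the deterministic path, with uniform control of remainders on the $O(\epsilon)$ time scale --- is left as a sketch in your write-up, and that step is precisely the content of the cited theorem; to be a complete proof you would either have to carry out that argument in full or, as the paper does, cite \refcite{monter2010scaling}. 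Two small additional remarks: the conditioning on $\{\tau_+<\tau_0\}$ is indeed asymptotically immaterial by \eqref{epsto0_1}, as you note (the paper uses this only for the conditional moments in Corollary \ref{Cor:AsympDistri}); and your covariance formula $\int_0^t\Phi(t,s)\Phi(t,s)^\intercal\,ds$ with $\Phi$ the state-transition matrix is the correct object --- it agrees with \eqref{eq:Sigma} provided $e^{\int_0^tA(r)\,dr}$ is read as the fundamental solution of $\dot\Phi=A\Phi$, which is how the paper uses it.
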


An immediate consequence of Theorem \ref{T:AsympDistri} is an asymptotic formula for the variance of $\tau_+$, conditioned on RI onset occurrence ($\tau_+$ is infinity by convention if RI does not occur. So we should consider the conditional variance rather than the variance of $\tau_+$).
\begin{corollary}[Variance of RI onset time]\label{Cor:AsympDistri}
As $\epsilon \to 0$, the distribution of the RI onset time $\tau_+$ is well approximated by a Gaussian variable with mean $T$ and conditional variance
\begin{align}\label{Formula_Var}
     Var(\tau_+ \,|\,\tau_+<\tau_0) &\approx \epsilon^2 \, \frac{\Sigma_{22}(T)}{\ell^2\,[u(T)+\ell]^2}.
\end{align}
\end{corollary}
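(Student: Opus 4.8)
The plan is to prove the result by combining the classical small-noise fluctuation expansion of the diffusion about the deterministic trajectory with a transversal-crossing (delta-method) argument for the hitting time. The guiding heuristic is that $\tau_+-T$ is, to leading order in $\epsilon$, the first-order fluctuation of the $v$-component at the deterministic crossing time $T$ divided by the crossing speed $\dot v(T)$. Concretely, I would first introduce the rescaled fluctuation $Y^\epsilon_t:=\epsilon^{-1}(X_t-x(t))$ and establish a functional central limit theorem on any compact interval $[0,T']$ with $T'>T$: subtracting the ODE from the SDE gives $dY^\epsilon_t=\epsilon^{-1}\big[\mu(x(t)+\epsilon Y^\epsilon_t)-\mu(x(t))\big]\,dt+dW_t$ with $Y^\epsilon_0=0$, and since $\epsilon^{-1}[\mu(x+\epsilon y)-\mu(x)]\to D\mu(x)\,y$, the limit process $Y$ solves the linear SDE $dY_t=A(t)\,Y_t\,dt+dW_t$, $Y_0=0$, with $A(t)=D\mu(x(t))$ as in \eqref{eq:matrixA}. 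Solving this explicitly through the fundamental matrix $\Phi$ (denoted $e^{\int_0^t A(r)\,dr}$ in \eqref{eq:Sigma}) gives the centered Gaussian $Y_t=\Phi(t)\int_0^t\Phi(s)^{-1}\,dW_s$, whose covariance is exactly $\Sigma(t)$ in \eqref{eq:Sigma}; in particular its $v$-component $Y^{(v)}_t$ is centered Gaussian with variance $\Sigma_{22}(t)$.

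Next I would record the crossing speed directly from the ODE. Differentiating along the trajectory, $\dot v(T)=\mu_2(x(T))=-u(T)\ell-\ell^2=-\ell\,[u(T)+\ell]$, so that $[\dot v(T)]^2=\ell^2[u(T)+\ell]^2$ is precisely the denominator appearing in \eqref{Formula_Var2}. Because $v_0\in(0,\ell)$ and $T$ is the first time the increasing deterministic trajectory reaches $\ell$, the crossing is transversal with $\dot v(T)=-\ell[u(T)+\ell]>0$, so $u(T)+\ell\neq 0$; this nonvanishing is exactly the ingredient that makes the denominator legitimate and the delta-method applicable.

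The heart of the argument is the transfer from the process-level CLT to the hitting-time CLT. Writing $V_t=v(t)+\epsilon\,Y^{(v)}_t+o(\epsilon)$ and using $v(T)=\ell$ together with the Taylor expansion $v(T+\epsilon\theta)=\ell+\dot v(T)\,\epsilon\theta+o(\epsilon)$, the defining relation $V_{\tau_+}=\ell$ under the ansatz $\tau_+=T+\epsilon\theta+o(\epsilon)$ forces $0=\dot v(T)\,\theta+Y^{(v)}_T$, hence $\epsilon^{-1}(\tau_+-T)\to\theta=-Y^{(v)}_T/\dot v(T)$. To make this rigorous I would invoke a hitting-time scaling limit of the type in \refcite{monter2010scaling}: since $Y^\epsilon\Rightarrow Y$ in $C([0,T'])$ and the limiting path crosses $\ell$ transversally, the first-passage functional is continuous at the limiting trajectory, so a continuous-mapping argument yields convergence in distribution of $\epsilon^{-1}(\tau_+-T)$ to $-Y^{(v)}_T/\dot v(T)$. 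As $Y^{(v)}_T$ is centered Gaussian with variance $\Sigma_{22}(T)$, the limit is centered Gaussian with variance $\Sigma_{22}(T)/[\dot v(T)]^2=\Sigma_{22}(T)/(\ell^2[u(T)+\ell]^2)$, which is \eqref{Formula_Var2}.

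I expect the main obstacle to lie in making this last transfer fully rigorous. Two technical points require care. First, establishing the fluctuation CLT itself: since $\mu$ is only polynomial (locally, not globally, Lipschitz), I would localize to a compact neighborhood of the compact deterministic arc $\{x(t):0\le t\le T'\}$, show by Gronwall-type estimates that $X$ stays close to $x(\cdot)$ with high probability, and control the remainder $X_t-x(t)-\epsilon Y_t$ to be $o(\epsilon)$ uniformly in $t$. Second, and more delicate, is verifying continuity of the first-passage map at the limiting path and controlling the $o(\epsilon)$ errors uniformly inside the shrinking $O(\epsilon)$ window around $T$, while ruling out a premature hit of the level $\ell$ before a neighborhood of $T$; transversality $\dot v(T)\neq0$ is precisely what tames both the continuity of the functional and the premature-crossing issue, and is therefore the crux on which the whole argument rests.
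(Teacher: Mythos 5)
Your proposal is correct and takes essentially the same route as the paper: the paper also rests on the small-noise joint hitting-time scaling limit of \refcite{monter2010scaling}, with your fluctuation process $Y$ playing the role of $\phi_0$ (a centered Gaussian with covariance $\Sigma(t)$ as in \eqref{eq:Sigma}), and your delta-method identity $\epsilon^{-1}(\tau_+-T)\to -Y^{(v)}_T/\dot v(T)$ being exactly the paper's projection computation $\pi_\mu\phi_0(T)=\phi_{0,2}(T)/\mu_2(z)$ with $\mu_2(z)=-\ell\,[u(T)+\ell]$. The only difference is presentational: you sketch the process-level fluctuation CLT and the continuous-mapping transfer yourself, whereas the paper imports both wholesale as Theorem 1 of \refcite{monter2010scaling} and only verifies its transversality hypothesis under Assumption \ref{A:ell}.
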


Corollary \ref{Cor:AsympDistri} is noteworthy because it captures the behavior of the conditional variance of $\tau_+$ in terms of the initial value $x_0$ as well as the model parameters $p,r,s$ as $\epsilon\to 0$, which is proportional to the variance of the additive noise $\epsilon^2$. Examination of its dependence on the model parameters $(p,r,s)$ shows that the probability distribution for $\tau_+$ is very close to a Gaussian distribution centered at $T$ when $\epsilon\to 0$, as will later be verified in our numerical simulations.

%The proof of Theorem \ref{T:AsympDistri} follows from  Theorem 1 of \cite{monter2010scaling} and a direct computation of the variance, which is given below. 
\begin{proof}[Proof of Theorem \ref{T:AsympDistri}]
Our proof is based on Theorem 1 in \refcite{monter2010scaling}
which gives an asymptotic result for a small noise stochastic diffusion equation
\begin{align}%[left=\empheqlbrace]
dX_\epsilon(t)=&[\mu(X_\epsilon(t))+\epsilon^{\alpha_1}\Psi_\epsilon(X_\epsilon(t))]dt+ \epsilon\sigma(X_\epsilon(t))dW_t \label{BakhtinDiffusion}\\
X_\epsilon(0)=&x_0+\epsilon^{\alpha_2}\xi_\epsilon. \label{BakhtinDiffusion_initial}
\end{align}

We need to check the conditions of that theorem before we can apply it.
By taking $\xi_\epsilon \equiv 0$, $\Psi_\epsilon \equiv 0$, $\alpha_1=1$ and $\sigma(\cdot)=I_{3\times 3}$ the unit matrix  in \eqref{BakhtinDiffusion}- \eqref{BakhtinDiffusion_initial},  we obtain  \eqref{SDE_UVB}, with the unperturbed initial condition $X_0=x_0$. We let $M$ be the hyperplane $M=\{(u,v,b)\in \mathbb{R}^3:\, v=\ell \}$ in $\mathbb{R}^3$. Then the hitting time $\tau_\epsilon$ in   Theorem 1 of \cite{monter2010scaling} is exactly the RI onset time $\tau_+$ defined in \eqref{tau+}.

%In the below we  apply   Theorem 1 of \cite{monter2010scaling} in our setting and compute the variance of the limiting distribution. 

\noindent
{\bf Step 1: Joint convergence. }
Recall that the deterministic time $T$ defined by \eqref{Def:T_v} is the first time the trajectory $v$ of the ODE \eqref{Eg1} hits level $\ell$. Denote by $z:=(z_1,z_2,z_3)=(u(T), \ell, b(T))$ the point where the trajectory of the MSD system \eqref{ODE_MSD} hits  $M$. In \refcite{bakhtin2017scaling} it is assumed that the deterministic vector field $\mu$ is smooth, and the deterministic time defined by \eqref{Def:T_v} must satisfy $0<T<\infty$. Moreover, it is assumed that $\mu(z)$ does not belong to the tangent space $T_zM$ of  $M$ at the point $z$ (or in other words the orbit of the system \eqref{ODE_MSD} intersect $M$ and the crossing is transversal). These  assumptions are satisfied by our MSD system \eqref{ODE_MSD} under Assumption \ref{A:ell}.
Therefore, we can indeed apply  Theorem 1 of \refcite{monter2010scaling}. 
%\noindent
%{\bf Step 1: SDE \eqref{SDE_UVB_2}, is a specialization of the SDE in  Theorem 1 of \cite{monter2010scaling}. } 

Let $\pi_\mu$ be the  projection onto $span(\mu(z))$ along $T_zM$ and $\pi_M$  the  projection onto $T_zM$ along $span(\mu(z))$; see Figure \ref{Fig:Proj}. Then for any vector $\eta\in \mathbb{R}^3$,  $\pi_\mu \eta \in \mathbb{R}$ and $\pi_M \eta \in T_zM$ satisfy
 \[\eta=\pi_\mu \eta \cdot \mu(z) + \pi_M \eta.\]

%\FloatBarrier
\begin{figure}[h!]
\centering
\includegraphics[scale=0.4]{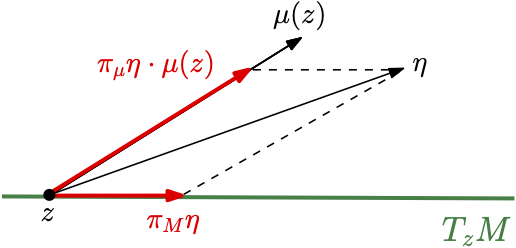}
\caption{Illustration of the projections $\pi_\mu \eta \in \mathbb{R}$ and $\pi_M \eta \in T_zM$. Given  vectors $\eta,\,\mu(z)\in \mathbb{R}^3$ and the tangent plane $T_zM$, we have
$\eta=\pi_\mu \eta \cdot \mu(z) + \pi_M \eta$.
%, where $\pi_\mu\eta$ is the (algebraic) projection  of $\eta$ onto $span(\mu(z))$ along $T_zM$, and $\pi_M \eta$ is the (geometric) projection onto $T_zM$ along $span(\mu(z))$.
}
\label{Fig:Proj}
\end{figure}
%\FloatBarrier
 
Note that in our case, the tangent space $T_zM$ is exactly $M$ itself since it is a plane. 
%Also, let $X_\epsilon$ and $X$ be random vectors with distribution functions $F_\epsilon$ and $F$ respectively, writing $X_\epsilon \xrightarrow{D} X$ as, $\epsilon \rightarrow 0$ we mean that $\displaystyle \lim_{\epsilon \rightarrow 0}F_\epsilon(x)=F(x)$ for all continuity points $x$ of $F$.
 Theorem 1 of \refcite{monter2010scaling} asserts the following convergence in distribution as $\epsilon \to 0$.
\begin{equation}\label{SpecializedBakhtinResult}
    \epsilon^{-1}\left(\tau_+-T,\,X_\epsilon(\tau_+)-z\right) \xrightarrow{d} \left(-\pi_\mu \phi_0(T),\pi_M\phi_0(T)\right)
\end{equation}
where
\begin{equation}\label{randomvectorphi}
    \phi_0(t)= \Phi_{x_0}(t) \int_0^t\Phi_{x_0}(s)^{-1} \, dW(s)
    %= \int_0^t \exp{\left\{\int_s^tA_x(s)\,ds\right\}} \,dW(s).
\end{equation} 
is a random vector in $\mathbb{R}^3$. 
%In particular, $\pi_\mu \phi_0(T)$ is a random variable in $(\mathbb{R},\mathcal{R})$, and $\pi_M\phi_0(T)$ is a random vector in $(\mathbb{R}^3,\mathcal{R}^3)$. (Roughly speaking, applying the deterministic operators of the projections $\pi_\mu$ and $\pi_M$ on the random vector $\phi_0(T) \in \mathbb{R}^3$ creates one random variable  $\pi_\mu \phi_0(T)$ and another random vector $\pi_M\phi_0(T)$).
The matrix-valued function $\Phi_{x_0}(t)=e^{\int_0^tA_{x_0}(r)\,dr}$ solves the equation
\begin{align*}%[left=\empheqlbrace]
    \frac{d}{dt}\Phi_{x_0}(t)&=
     A_{x_0}(t)
    \Phi_{x_0}(t)\\
    \Phi_{x_0}(0)&= I_{3\times 3},
\end{align*}
where $A_{x_0}(t)$ is given by \eqref{eq:matrixA}.
%The convergence in \eqref{SpecializedBakhtinResult} can be strengthened to convergence in probability, since $\xi_\epsilon \equiv 0 \xrightarrow{\P} \xi_0 \equiv 0$.
 %Roughly speaking, theorem  Theorem 1 of \cite{monter2010scaling} would then give us a result for $\epsilon \approx 0$ as
%\begin{equation}\label{AsympDistri}
%\big(\tau_{+}, X_\epsilon(\tau_+)\big) \approx (T,z) + \epsilon \big(-\pi_\mu\phi_0(T), \pi_M \phi_0(T)  \big)
%\end{equation}

\noindent
{\bf Step 2: Projection and variance computation. }
The  above 3-dimensional random vector \eqref{randomvectorphi} is  Gaussian distributed with mean zero and co-variance matrix $\Sigma(t)$ given by \eqref{eq:Sigma}. That is,
$
\phi_0(t)\sim \mathcal{N}(0, \Sigma(t))
$.
%\begin{align}
%\Sigma(t)=&\Phi_{x_0}(t) \cdot \left(\int_0^t \Phi_{x_0}(s)^{-1} (\Phi_{x_0}(s)^{-1})^\intercal \, ds\right) \cdot (\Phi_{x_0}(t))^\intercal \\
%=& e^{\int_0^tA(r)\,dr} \cdot \left(\int_0^t e^{-\int_0^s A(r)\,dr} e^{-\int_0^s A^\intercal(r)\,dr}  \, ds\right) \cdot e^{\int_0^tA^\intercal(r)\,dr}.
%\end{align}
%find its algebraic projection  $\pi_\mu \phi_0(T)$  of $\phi_0(T)$ onto $span(\mu(z))$ along $T_zM$.
%We will 
Let $\phi_0(T)=(\phi_{0,1}(T),\phi_{0,2}(T),\phi_{0,3}(T))$.
Clearly, $\phi_{0,2}(T) \sim \mathcal{N}(0,\Sigma_{22}(T))$. 

By definition of the projections, we have 
$$\phi_0(T)=\pi_\mu \phi_0(T)  \mu(z) + \pi_M \phi_0(T),$$  
where $\pi_\mu \phi_0(T) \in \mathbb{R}$ and $\pi_M \phi_0(T) \in T_zM$.  See Figure \ref{Fig:Proj} with $\eta=\phi_0(T)$ for an illustration.
Since $T_zM$ is parallel to the $(u,b)$-plane, the second coordinate (i.e. the $v$-coordinate) of $\phi_0(T)$ is the same as that of $\pi_\mu \phi_0(T)  \mu(z)$. That is,
\[
\phi_{0,2}(T)=\pi_\mu \phi_0(T) \cdot \mu_2(z).
\]

This implies that
\begin{equation}\label{ProjProof}
     \pi_\mu \phi_0(T)= \frac{\phi_{0,2}(T)}{\mu_2(z)}
\end{equation}
and
so $\pi_\mu \phi_0(T)$ is a centered Gaussian vector with variance
$\frac{\Sigma_{22}(T)}{\mu_2(z)^2}$.

\noindent
{\bf Step 3: Conclusion. }
In conclusion, from \eqref{SpecializedBakhtinResult}, for $\epsilon$ close to zero we get that in distribution, 
\begin{equation}\label{AsympDistri}
\big(\tau_{+}, X_\epsilon(\tau_+)\big) \approx (T,z) + \epsilon \big(-\pi_\mu\phi_0(T), \pi_M \phi_0(T)  \big).
\end{equation}

%which has mean ?? and variance ?
%Namely, $\epsilon=0.01, \alpha=1, T=10.528, \rho=\pi_\mu\phi_0(T) \sim \mathcal{N}(0,1000/81)$ and the distribution of $\tau_+$ is close to (asymptotic convergence) to the distribution of the Gaussian random variable $\frac{10.528-\rho}{100}$.

\noindent
From \eqref{AsympDistri} and \eqref{epsto0_1}, the conditional expectation
\begin{align}\label{CondEx}
    \E_{x_0}[\tau_+ \,|\,\tau_+<\tau_0] \to T \; \mbox{as} \; \epsilon \rightarrow 0,
\end{align}
because $\phi_0(T)$ is a centered Gaussian vector. For the conditional variance,
\begin{align*}
    Var(\tau_+ \,|\,\tau_+<\tau_0)\approx %Var(T-\epsilon\pi_\mu\phi_0(T)) 
    \epsilon^2 \frac{\Sigma_{22}(T)}{\mu_2(z)^2}= \epsilon^2 \frac{\Sigma_{22}(T)}{z_2^2(z_1+z_2)^2}.
\end{align*}
\end{proof}

\subsection{Hitting analysis in one-dimension for small initial values}\label{S:1-dim}
The behavior of TC dynamics as shown in Figure \ref{fig1} reveals an important characteristic of the stochastic forcing in TC development. Specifically, the pre-RI period before TC intensity rapidly amplifies is characterized by very slow evolution, much like a constant-forcing dynamical system. One can therefore exploit further the consequence of this property to study RI onset by considering a general one-dimensional SDE model for the $v$ component, which can provide more insights into the variability of RI onset time. Specifically, we wish to examine herein a particular case in which \textit{the noise $\epsilon$ is fixed and $v_0$ is small} ($v_0\to 0$). This case differs from \eqref{epsto0_1} and Theorem \ref{T:AsympDistri} which focus on the probability of RI onset and the distribution of the onset time for a limit of the small noise $\epsilon\to 0$ with a fixed initial condition $(u_0,v_0,b_0)$. As such, the behaviors of RI onset for a fixed noise $\epsilon$ but small $v_0$ are not unclear from Theorem \ref{T:AsympDistri}, which we wish to examine further in this subsection. 

For this purpose, we observe from our Monte-Carlo simulations of the MSD system to be presented in Section 4 that (see the lower left panels in Figure \ref{Histogram})  
\begin{enumerate}
    \item[$\mathcal{O}$1:] the probability of RI onset gets smaller as $v_0\to 0$.
    \item[$\mathcal{O}$2:] the conditional distribution of the RI onset time $\tau_+$ (given that an RI onset occurred) is skewed to the left and has a \textit{smaller} averaged value than the deterministic onset time $T$, and
\end{enumerate}
This regime (i.e., $v_0$ is very small compared with the noise) is challenging to analyse, even for the 3-dimensional SDE like \eqref{SDE_UVB2}, because a standard Gaussian approximation is no longer valid. However, it is possible to offer some insight into the aforementioned observations through the following general 1-dimensional SDE:
\begin{equation}\label{1D}
dZ_t=F(Z_t)\,dt + \epsilon dW_t,
\end{equation}
where $W$ is the Wiener process in $\R$, and $F:\,\R\to\R_+$ is an arbitrary given smooth function such that
\begin{equation}\label{A:F}
F(0)=0 \quad\text{and}\quad F(x) >0 \text{ for }x>0.
\end{equation}
Our aim here is to compare the qualitative behavior of the  $V$-component of \eqref{SDE_UVB2} and the process $Z$ solving \eqref{1D},  when the initial value $v_0$ is ``small". How ``small" the initial value is depends on the  fixed noise level $\epsilon$, as quantified in Theorem \ref{T:asymP} below.

\begin{remark}[Why consider 1-dim SDE ?]\rm
We should emphasise here that it is not our intention to directly applying \eqref{1D} to the MSD system. Instead, the aim of this 1D system is to qualitatively capture the behaviors of TC dynamics during the initial development up to the RI onset moment for which the MSD forcing can be approximated as a constant. The advantage of this analysis lies in the fact that the forcing function $F$ can be quite general; that is, we do not require any specific functional form for $F$ and so our analysis for the 1D system \ref{1D} works for a larger class of forcing functions $F$. 
%{\color{blue} The punchlines of this section are Theorem \ref{T:asymP} and Theorem \ref{T:AsymTime} below, which not only confirm but also quantify the observations $\mathcal{O}$1 and $\mathcal{O}$2 respectively for the 1-dimensional situation. }
\end{remark}

We consider the hitting times at the endpoints 0 and  $\ell$. That is, $$\tau_i:=\inf\{t\geq 0 :\;  Z_t=i\} \quad \text{for } i=0,\,\ell.$$
Analogous to the probability of RI onset is
$\P_x(\tau_{\ell}<\tau_0)$,  the probability of hitting $\ell$ before 0 provided that \eqref{1D} starts at $Z_0=x$. 
Lemma \ref{Lm:HittingProb_1d} below gives an exact formula for this, which is not available in higher dimensions in general.
\begin{lemma}\label{Lm:HittingProb_1d}
The probability of hitting $\ell$ before 0 for $Z$ in \eqref{1D} starting at $x\in[0,\ell]$ is
\begin{equation*}
\P_x(\tau_{\ell}<\tau_0)=\frac{\int_{0}^x k_\epsilon(y)\,dy}{\int_{0}^\ell k_\epsilon(y)\,dy},
\end{equation*}
where $k_\epsilon$ is the function
\begin{equation}\label{Def:kepsilon}
    k_\epsilon(y)=\exp\left\{\frac{-2}{\epsilon^2}\int_0^yF(t)dt\right\}.
\end{equation}
\end{lemma}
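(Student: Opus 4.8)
The plan is to mirror the generator-based derivation of Lemma~\ref{Lm:HittingProb}, but to exploit the fact that in one dimension the resulting boundary value problem can be integrated in closed form. I would write $p(x):=\P_x(\tau_\ell<\tau_0)$ and introduce the infinitesimal generator $\mathcal{L}_1=\frac{\epsilon^2}{2}\frac{d^2}{dx^2}+F(x)\frac{d}{dx}$ of the diffusion \eqref{1D}. Setting $\tau:=\min\{\tau_0,\tau_\ell\}$, the event $\{\tau_\ell<\tau_0\}$ coincides with $\{Z_\tau=\ell\}$, so that $p(x)=\E_x[\mathbf{1}_{\{Z_\tau=\ell\}}]$. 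Exactly as in Lemma~\ref{Lm:HittingProb}, I would apply Dynkin's formula \eqref{Dynkin} to a candidate $f$ with $\mathcal{L}_1 f=0$ on $(0,\ell)$ and boundary values $f(0)=0$, $f(\ell)=1$, to conclude that $p$ solves the two-point boundary value problem
\[
\frac{\epsilon^2}{2}\,p''(x) + F(x)\,p'(x) = 0 \quad \text{on } (0,\ell), \qquad p(0)=0,\ p(\ell)=1.
\]

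The key simplification relative to the three-dimensional case is that this is a single second-order linear ODE with no mixed derivatives. I would substitute $w:=p'$ to reduce it to the first-order separable equation $\frac{\epsilon^2}{2}w'+F(x)w=0$, integrate to obtain $w(x)=C_1\exp\{-\frac{2}{\epsilon^2}\int_0^x F(t)\,dt\}=C_1\,k_\epsilon(x)$ with $k_\epsilon$ as in \eqref{Def:kepsilon}, and then integrate once more to get $p(x)=C_1\int_0^x k_\epsilon(y)\,dy+C_2$. Imposing $p(0)=0$ forces $C_2=0$, and $p(\ell)=1$ fixes $C_1=\left(\int_0^\ell k_\epsilon(y)\,dy\right)^{-1}$, which is well defined and strictly positive since $k_\epsilon$ is continuous and positive on $[0,\ell]$ (using $F\ge 0$). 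This would produce exactly the claimed formula.

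The only genuine technical point, and the part I expect to require the most care, is justifying the probabilistic representation itself: that $p$ \emph{equals} the solution of the boundary value problem rather than merely being a solution. The hard part will be verifying that the exit time $\tau$ is almost surely finite under $\P_x$ for every $x\in[0,\ell]$ (ideally with finite expectation), so that Dynkin's formula may be applied with $\tau$ as stopping time and the boundary contributions collected without error. I would obtain this from the nondegeneracy of the noise ($\epsilon>0$) on the bounded interval $[0,\ell]$: a uniformly elliptic generator forces the process to exit any bounded interval in finite mean time, a standard estimate that can be produced by testing $\mathcal{L}_1$ against a suitable bounded comparison function. Granting this, uniqueness of the boundary value problem solution --- immediate from the explicit integration above --- identifies $p$ with the closed-form expression and finishes the argument.
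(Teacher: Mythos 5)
Your proposal is correct and follows essentially the same route as the paper: reduce to the two-point boundary value problem $\tfrac{\epsilon^2}{2}h''+Fh'=0$, $h(0)=0$, $h(\ell)=1$ via the Dynkin/generator argument of Lemma~\ref{Lm:HittingProb}, then integrate explicitly using the integrating factor $k_\epsilon$. The additional care you take with the a.s.\ finiteness of the exit time is a reasonable elaboration of a point the paper leaves implicit, but it does not change the argument.
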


%In view of \eqref{A:smallInitial}, we now suppose the starting point of $Z$ is $\epsilon^{\alpha}$ which will tend to zero as $\epsilon\to 0$.  
The following result quantifies a dichotomy for the probability $\P_{\epsilon^{\alpha}}(\tau_{\ell}<\tau_0)$ 
 of hitting $\ell$ before 0, starting at $\epsilon^{\alpha}$. Namely this probability is close to 0 if the starting point is small ($\alpha$ large), and close to 1 when the starting point is large  ($\alpha$ small).

%is about the asymptotic behavior of $h(\beta\epsilon^{\alpha})$ as $\epsilon\to 0$, based on assumptions on $F$. 

\begin{theorem}[Asymptotic hitting probability]\label{T:asymP}
%Let $z_\epsilon=\epsilon^\alpha$, for some $\alpha>0$, then 
Suppose $F$ is a smooth function satisfying \eqref{A:F} and $F'(0) >0$.
For all $c>0$, the probability of hitting $\ell$ before 0 for $Z$ in \eqref{1D} starting at $c\, \epsilon^{\alpha}$ satisfies
\[ \lim_{\epsilon \rightarrow 0} %h\left(\epsilon^{\alpha}\right)
\P_{c\,\epsilon^{\alpha}}(\tau_{\ell}<\tau_0)
=
\begin{cases}
1&\text{if}\ \alpha\in (0,1)  \quad \text{i.e. starting point is not small}\\
\erf(c \sqrt{F'(0)})&\text{if}\ \alpha=1\\
0 &\text{if}\ \alpha>1 \qquad \text{i.e. starting point is very small},
\end{cases} 
\]
where  $\erf(x):=\frac{2}{\sqrt{\pi}}\int_0^x e^{-z^2} \,dz$ is the error function.
\end{theorem}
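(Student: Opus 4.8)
The plan is to start from the exact ratio in Lemma~\ref{Lm:HittingProb_1d} and extract its small-$\epsilon$ asymptotics by the Laplace method. Writing $V(y):=\int_0^y F(t)\,dt$ for the potential, the hypotheses \eqref{A:F} together with $F'(0)>0$ give $V(0)=0$, $V'(0)=F(0)=0$ and $V''(0)=F'(0)>0$, so that $V(y)=\frac{F'(0)}{2}y^2+o(y^2)$ near the origin and $V$ is strictly increasing on $[0,\ell]$. Hence $k_\epsilon(y)=\exp\{-\frac{2}{\epsilon^2}V(y)\}$ is maximal at $y=0$, and both the numerator $N(\epsilon):=\int_0^{c\epsilon^\alpha}k_\epsilon(y)\,dy$ and the denominator $D(\epsilon):=\int_0^\ell k_\epsilon(y)\,dy$ concentrate near $0$ as $\epsilon\to0$.

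First I would compute the denominator. Rescaling $y=\epsilon w$ gives $D(\epsilon)=\epsilon\int_0^{\ell/\epsilon}\exp\{-\frac{2}{\epsilon^2}V(\epsilon w)\}\,dw$; for fixed $w$ the integrand converges to $e^{-F'(0)w^2}$, and a dominated convergence argument (supplied below) yields $D(\epsilon)\sim \frac{\epsilon}{2}\sqrt{\pi/F'(0)}$ as $\epsilon\to0$. The same rescaling of the numerator, $N(\epsilon)=\epsilon\int_0^{c\epsilon^{\alpha-1}}\exp\{-\frac{2}{\epsilon^2}V(\epsilon w)\}\,dw$, then handles the three regimes. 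When $\alpha\in(0,1)$ the upper limit $c\epsilon^{\alpha-1}\to\infty$, so $N(\epsilon)$ has the same limit as $D(\epsilon)$ and the ratio tends to $1$. When $\alpha=1$ the upper limit is the fixed constant $c$, and the ratio converges to $\frac{2}{\sqrt{\pi}}\sqrt{F'(0)}\int_0^c e^{-F'(0)w^2}\,dw$, which after the substitution $z=\sqrt{F'(0)}\,w$ is exactly $\erf(c\sqrt{F'(0)})$.

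When $\alpha>1$ it is cleaner not to rescale: on $[0,c\epsilon^\alpha]$ one has $\frac{2}{\epsilon^2}V(y)\le \frac{2}{\epsilon^2}V(c\epsilon^\alpha)=O(\epsilon^{2\alpha-2})\to0$, so $N(\epsilon)=c\epsilon^\alpha(1+o(1))$, and dividing by $D(\epsilon)=O(\epsilon)$ gives a ratio of order $\epsilon^{\alpha-1}\to0$. This recovers all three cases of the stated trichotomy once the limiting integrals above are justified.

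The main obstacle is precisely that justification, i.e. supplying a dominating function valid over the growing range of $w$ appearing in the denominator and in the $\alpha\in(0,1)$ numerator. The key estimate is a uniform quadratic lower bound on the potential: since $V(y)/y^2\to F'(0)/2$ as $y\to0$ while $V$ is continuous and strictly positive on $[\delta,\ell]$ for any fixed $\delta>0$, there exists a constant $\kappa>0$ with $V(y)\ge \kappa\, y^2$ for all $y\in[0,\ell]$. Consequently $\exp\{-\frac{2}{\epsilon^2}V(\epsilon w)\}\le e^{-2\kappa w^2}$ for every $w\in[0,\ell/\epsilon]$, and $e^{-2\kappa w^2}$ is integrable on $[0,\infty)$; dominated convergence then applies uniformly across all three regimes. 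Assembling the asymptotics of $N(\epsilon)$ and $D(\epsilon)$ through $\P_{c\epsilon^\alpha}(\tau_\ell<\tau_0)=N(\epsilon)/D(\epsilon)$ yields the claimed limits.
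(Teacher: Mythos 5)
Your argument is correct, and while it starts from the same exact formula (Lemma~\ref{Lm:HittingProb_1d}) and is Laplace-method in spirit, the execution is genuinely different from the paper's. The paper replaces $f_1(y)=\int_0^y F$ by its quadratic approximation $f_2(y)=F'(0)y^2/2$, bounds the resulting error in the numerator by $\tfrac{c^4}{3}M\epsilon^{2(2\alpha-1)}$ via a second-order Taylor remainder, evaluates the approximating integral exactly as an error function, and compares with the Laplace asymptotics of the denominator; this error bound only beats the denominator's order $\epsilon$ when $\alpha>3/4$, so the paper must patch the range $\alpha\in(0,3/4]$ with a separate monotonicity argument ($x\mapsto\P_x(\tau_\ell<\tau_0)$ is increasing). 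You instead rescale $y=\epsilon w$ and pass to the limit by dominated convergence, with the crucial supporting estimate being the uniform quadratic lower bound $V(y)\ge\kappa y^2$ on $[0,\ell]$ (which you justify correctly from $V(y)/y^2\to F'(0)/2$ near $0$ and positivity of $V$ away from $0$), giving the Gaussian dominating function $e^{-2\kappa w^2}$ on the full range $[0,\ell/\epsilon]$. This treats all three regimes in one stroke, dispenses with both the $\alpha>3/4$ restriction and the monotonicity patch, and needs only $F$ differentiable at $0$ rather than the two derivatives the paper's Taylor-remainder bound consumes. The only things worth making explicit in a final write-up are the pointwise limit $\tfrac{2}{\epsilon^2}V(\epsilon w)\to F'(0)w^2$ for fixed $w$, and, in the $\alpha\in(0,1)$ case, that the upper limit $c\epsilon^{\alpha-1}$ eventually lies below $\ell/\epsilon$ so the domination applies; both are immediate.
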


\begin{remark}\label{Rk:asymP}
Analogous to the \textit{RI onset indicator} \eqref{Def:Indicator} is the inverse $h^{-1}_{\epsilon}(0,8)$. From the critical case $\alpha=1$, $\lim_{\epsilon\to 0} h(c\,\epsilon)=\erf(c \sqrt{F'(0)})$. Then
\[
h^{-1}_{\epsilon}(0.8) \approx \frac{\epsilon}{\sqrt{F'(0)}}\,\phi^{-1}(0.8)
\]
is linear in $\epsilon$ when $\epsilon\approx 0$. This is consistent with the approximately linear curve in Figure \ref{Probability2}.
\end{remark}

Now we condition on the event $\{\tau_{\ell}<\tau_0\}$ and consider the conditional distribution of the hitting time $\tau_{\ell}$. We shall compute conditional expected time 
This is analogous to conditioning on RI onset occurrence and consider the conditional distribution of the RI onset time. 
%To this end, we take advantage of the following observation: the conditional distribution of the diffusion process $Z$ is itself a diffusion process which we call $\widetilde{Z}$  in the next lemma. 
Precisely, we shall compute the conditional expected time
\begin{equation}\label{E:tau and Ttau}
\E_{x}[\tau_{\ell}\,|\,\tau_{\ell}<\tau_0].
\end{equation}

For the rest of this section, we obtain an explicit formula for this conditional expected time  in Lemma \ref{Lem:ConEXP 1-d}, and study its asymptotic behavior in Theorem \ref{T:AsymTime} below.
\begin{lemma}\label{Lem:ConEXP 1-d}
For  $x \in (0,\ell]$,
\begin{equation}\label{E:ConEXP 1-d}
%\E_x[\widetilde{\tau}]=
\E_x[\tau\,|\,\tau_\ell<\tau_0]=\frac{2}{\epsilon^2} \frac{1}{\int_0^x k_\epsilon(z) dz} 
   \left\{ \int_x^\ell \int_0^x k_\epsilon(z) k_\epsilon(u) \int_u^z  \frac{p(y)}{k_\epsilon(y)}\, dy \,du \,dz\right\},
\end{equation}
where $p(x)=\P_x(\tau_\ell<\tau_0)$ is the probability in Lemma \ref{Lm:HittingProb_1d}.
\end{lemma}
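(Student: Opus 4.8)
The plan is to reduce the conditional expectation to a ratio of two quantities that solve elementary boundary value problems, and then solve the relevant problem explicitly with a Green's function. Write $\mathcal{L}$ for the generator of the one-dimensional diffusion \eqref{1D}, i.e. $\mathcal{L}g=\frac{\epsilon^2}{2}g''+F g'$, and introduce the scale function $s(x):=\int_0^x k_\epsilon(y)\,dy$ with $k_\epsilon$ as in \eqref{Def:kepsilon}. A direct computation gives $\mathcal{L}s=0$, and Lemma \ref{Lm:HittingProb_1d} records $p(x)=s(x)/s(\ell)$, so in particular $\mathcal{L}p=0$ on $(0,\ell)$ with $p(0)=0$ and $p(\ell)=1$. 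Since $\tau=\tau_\ell$ on the event $\{\tau_\ell<\tau_0\}$, it suffices to compute $w(x):=\E_x[\tau\,\mathbf{1}_{\{\tau_\ell<\tau_0\}}]$ and divide by $p(x)$.

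The conceptual core is the identity $w(x)=\E_x\!\left[\int_0^\tau p(Z_s)\,ds\right]$, where $\tau=\min(\tau_0,\tau_\ell)$. To obtain it I would apply It\^o's formula to $t\,p(Z_t)$; using $\mathcal{L}p=0$ this shows that $(t\wedge\tau)\,p(Z_{t\wedge\tau})-\int_0^{t\wedge\tau}p(Z_s)\,ds$ is a martingale. Because $\epsilon>0$ the diffusion is nondegenerate on the bounded interval $[0,\ell]$, so $\E_x[\tau]<\infty$ and optional stopping applies; letting $t\to\infty$ and using $p(Z_\tau)=\mathbf{1}_{\{Z_\tau=\ell\}}$ yields the identity. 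By the standard Dynkin/Feynman--Kac characterization this is equivalent to saying that $w$ is the unique solution of the boundary value problem $\mathcal{L}w=-p$ on $(0,\ell)$ with $w(0)=w(\ell)=0$.

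It then remains to solve this linear problem explicitly. The homogeneous equation $\mathcal{L}v=0$ has the two solutions $s(x)$ and $s(\ell)-s(x)$, vanishing at $0$ and at $\ell$ respectively; computing their Wronskian, which equals $-k_\epsilon(y)\,s(\ell)$, and imposing the jump condition dictated by the coefficient $\epsilon^2/2$ produces the Green's function
\[
G(x,y)=\frac{2}{\epsilon^2\,s(\ell)\,k_\epsilon(y)}\,
\begin{cases}
s(x)\,[\,s(\ell)-s(y)\,], & x\le y,\\
s(y)\,[\,s(\ell)-s(x)\,], & x\ge y,
\end{cases}
\]
so that $w(x)=\int_0^\ell G(x,y)\,p(y)\,dy$. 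Substituting this together with $p(x)=s(x)/s(\ell)$ into $w(x)/p(x)$ and splitting the integral at $y=x$ gives a two-term expression in which the factors $s(\ell)-s(x)=\int_x^\ell k_\epsilon(z)\,dz$ and $s(x)=\int_0^x k_\epsilon(u)\,du$ appear as integrals of $k_\epsilon$.

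The final step, and the main technical obstacle, is to recognize this two-region expression as the single triple integral in \eqref{E:ConEXP 1-d}. I would do this by Fubini: writing each $s$-factor as an integral of $k_\epsilon$ and re-ordering, the conditional mean becomes the integral of $k_\epsilon(z)\,k_\epsilon(u)\,p(y)/k_\epsilon(y)$ over the region $\{0\le u\le x\le z\le \ell,\ u\le y\le z\}$, which is exactly $\frac{2}{\epsilon^2}\,s(x)^{-1}\int_x^\ell\!\int_0^x\!\int_u^z$ as stated. This reorganization is routine but demands careful bookkeeping of the three nested ranges; by contrast, everything upstream (the martingale identity and the Green's function) is standard one-dimensional diffusion theory.
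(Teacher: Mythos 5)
Your proposal is correct and follows essentially the same route as the paper: both reduce the claim to computing $H(x)=\E_x[\tau\,\mathbf{1}_{\{\tau_\ell<\tau_0\}}]$, characterize it as the solution of $\frac{\epsilon^2}{2}H''+FH'=-p$ on $(0,\ell)$ with $H(0)=H(\ell)=0$, solve that problem explicitly, and divide by $p(x)$. The only differences are cosmetic --- you justify the boundary value problem via the martingale identity $H(x)=\E_x\bigl[\int_0^\tau p(Z_s)\,ds\bigr]$ and solve it with a Green's function, whereas the paper integrates twice with the integrating factor $k_\epsilon^{-1}$ and obtains the triple integral by an antisymmetrization identity rather than your Fubini bookkeeping; both yield the same expression.
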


\medskip

%Our asymptotic analysis depends on our estimate for the function $F_\epsilon(x)$ defined in \eqref{Def:F_epsilon}, as $\epsilon\to 0$. From \eqref{Def:F_epsilon}, $F_{\epsilon}$ is always larger than $F$, so conditioning on hitting $\ell$ before 0 results in an \textit{increase} in the upward drift. Since $k_\epsilon$ is a non-negative decreasing function, we have $xk_\epsilon(x)\leq \int_0^xk_\epsilon(y)\,dy\leq x$ and 
%\begin{equation}\label{Bound_F_epsilon} F(x)+\frac{\epsilon^2\,k_{\epsilon}(x)}{x} \leq   F_\epsilon(x)
 %=F(x)+\epsilon^2\frac{k_\epsilon(x)}{\int_0^xk_\epsilon(y)\,dy}  \leq F(x)+ \frac{\epsilon^2}{x}.
%\end{equation}
%We quantify the size of this increase in the next lemma.

%We may also use the fact  that for any positive integer $n$, $e^{-\frac{1}{z^2}} \leq n! z^{2n}$.
%Setting, $z=\frac{\epsilon}{y \sqrt{\beta}}$
%we get,
%\[e^{-\frac{\beta}{\epsilon^2}y^2}\leq n! \frac{\epsilon^{2n}}{\beta^n y^{2n}}.\]
%{\color{red}(the upper bound can be BIG when $y$ is small, such as $y=\epsilon^{\alpha}$ for $\alpha>1$.)}

%Now we suppose the starting point is $\epsilon^{\alpha}$ which will tend to zero as $\epsilon\to 0$.  

Theorem \ref{T:AsymTime} below asserts that as the starting point $x\to 0$, 
\begin{equation}\label{E:AsymTime2b}
\E_{x}[\tau_{\ell}\,|\,\tau_{\ell}<\tau_0] \,\approx\,  \Psi(\epsilon) - \frac{x^2}{3\epsilon^2}
\end{equation} for some positive number $\Psi(\epsilon)$.
\begin{theorem}[Asymptotic conditional hitting time]\label{T:AsymTime}
Suppose $F$ is a continuous function. For each fixed noise level $\epsilon>0$, %as the starting point $x\to 0$ we have
\begin{equation}\label{E:AsymTime1}
\lim_{x\to 0}\E_{x}[\tau_{\ell}\,|\,\tau_{\ell}<\tau_0]= \Psi(\epsilon)
\end{equation}
where
\begin{align}
\Psi(\epsilon)= \frac{2}{\epsilon^2} \int_0^\ell k_\epsilon(u) \int_0^u \frac{p(y)}{k_\epsilon(y)}\, dy \,du \in (0,\infty). \label{E:AsymTime_Psi}
\end{align}
Furthermore, suppose $F$ satisfies \eqref{A:F} and $F'(0)>0$. Then
\begin{equation}\label{E:AsymTime2}
\lim_{x\to 0}\frac{\Psi(\epsilon)-\E_{x}[\tau_{\ell}\,|\,\tau_{\ell}<\tau_0]}{x^2} = \frac{1}{3\epsilon^2}.
\end{equation}
%If the starting point is $\epsilon^{\alpha}$, then as $\epsilon\to 0$,
%\begin{equation}\label{E:AsymTime2}\E_{\epsilon^{\alpha}}[\tau_{\ell}\,|\,\tau_{\ell}<\tau_0]\sim
%\begin{cases}\frac{-\alpha}{\beta} \ln(\epsilon), \quad&\text{if }\alpha \in(0,1)\\\frac{-1}{\beta}\,\ln(\epsilon), \quad&\text{if }\alpha \in [1,\infty) 
%\end{cases}.
%\end{equation}
%This  magnitude is the same as that for  conditional hitting time for the stochastic system when $\alpha\in(0,1]$ (i.e. starting point is not too small compared with the noise level), according to Proposition \ref{T:AsymTime}. However, when $\alpha>1$ (i.e. starting point is very small compared with the noise level), the  conditional hitting time for the stochastic system \eqref{1D} is {\it smaller} according to Proposition \ref{T:AsymTime}. This discrepancy is consistent with the observation mentioned at the beginning of this section. 
\end{theorem}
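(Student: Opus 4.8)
The plan is to work directly from the exact formula for the conditional expected time in Lemma~\ref{Lem:ConEXP 1-d} and to read off its behaviour as $x\to 0$ by Taylor expansion of the two building blocks $k_\epsilon$ and $p$ near the origin. Throughout I write $K(x):=\int_0^x k_\epsilon(z)\,dz$, $\psi(y):=p(y)/k_\epsilon(y)$ and $\Phi(z):=\int_0^z\psi(y)\,dy$. The one structural fact that drives everything is $k_\epsilon(0)=1$, so that $K(x)\sim x$ as $x\to 0$; this is what tames the singular prefactor $1/K(x)$ appearing in Lemma~\ref{Lem:ConEXP 1-d}.

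For the first limit \eqref{E:AsymTime1} I would rewrite the formula as $\E_x[\tau_\ell\mid\tau_\ell<\tau_0]=\frac{2}{\epsilon^2}\,I(x)/K(x)$, where $I(x)=\int_x^\ell\int_0^x k_\epsilon(z)k_\epsilon(u)\big(\Phi(z)-\Phi(u)\big)\,du\,dz$ after substituting $\int_u^z\psi=\Phi(z)-\Phi(u)$, and then divide numerator and denominator by $x$. Since $\frac1x\int_0^x k_\epsilon(u)\big(\Phi(z)-\Phi(u)\big)\,du\to\Phi(z)$ with a bound uniform in $z\in[0,\ell]$, dominated convergence gives $I(x)/x\to\int_0^\ell k_\epsilon(z)\Phi(z)\,dz=:J$, while $K(x)/x\to1$. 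Hence the limit equals $\frac{2}{\epsilon^2}J=\Psi(\epsilon)$, and $\Psi(\epsilon)\in(0,\infty)$ because $k_\epsilon$ and $p$ are continuous and strictly positive on the compact interval $[0,\ell]$.

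The heart of the theorem is the second-order statement \eqref{E:AsymTime2}, and the decisive step is an exact cancellation rather than a brute-force expansion of the triple integral. Writing $\int_u^z\psi=\Phi(z)-\Phi(u)$ splits $I(x)$ into a difference of products, $I(x)=A(x)K(x)-\big(K(\ell)-K(x)\big)C(x)$, where $A(x)=\int_x^\ell k_\epsilon\Phi=J-C(x)$ and $C(x)=\int_0^x k_\epsilon(u)\Phi(u)\,du$. The cross terms $C(x)K(x)$ cancel, leaving the clean identity
\[
\frac{I(x)}{K(x)}=J-K(\ell)\,\frac{C(x)}{K(x)},
\qquad\text{so that}\qquad
\Psi(\epsilon)-\E_x[\tau_\ell\mid\tau_\ell<\tau_0]=\frac{2K(\ell)}{\epsilon^2}\,\frac{C(x)}{K(x)}.
\]

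With this identity the remaining asymptotics are routine. From $k_\epsilon(0)=1$ one gets $K(y)\sim y$ and hence $\psi(y)=K(y)/\big(K(\ell)k_\epsilon(y)\big)\sim y/K(\ell)$, which integrates to $\Phi(u)\sim u^2/(2K(\ell))$ and then $C(x)=\int_0^x k_\epsilon\Phi\sim x^3/(6K(\ell))$, while $K(x)\sim x$; dividing by $x^2$ yields $C(x)/\big(x^2K(x)\big)\to 1/(6K(\ell))$ and therefore $\big(\Psi(\epsilon)-\E_x[\tau_\ell\mid\tau_\ell<\tau_0]\big)/x^2\to\frac{2K(\ell)}{\epsilon^2}\cdot\frac{1}{6K(\ell)}=\frac{1}{3\epsilon^2}$, which is \eqref{E:AsymTime2}. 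The main obstacle is precisely spotting the cancellation that produces the identity above: without it one is forced to expand the ratio of the triple integral to $K(x)$ and subtract $\Psi(\epsilon)$ while tracking $o(x^2)$ and $o(x^3)$ remainders by hand, which is delicate and error-prone. It is worth recording that, beyond the explicit form of $p$ from Lemma~\ref{Lm:HittingProb_1d}, the final expansion uses only continuity of $F$ near $0$ (so that $k_\epsilon(0)=1$ and $\psi(y)\sim y/K(\ell)$); the standing hypotheses \eqref{A:F} and $F'(0)>0$ enter only through the earlier lemmas.
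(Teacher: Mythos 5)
Your proof is correct, but it takes a genuinely different route from the paper's. The paper works with $L(x):=\int_x^\ell\int_0^x k_\epsilon(z)k_\epsilon(u)\int_u^z \frac{p(y)}{k_\epsilon(y)}\,dy\,du\,dz$ directly, computes $L'$, $L''$ and $L^{(3)}$ by the Leibniz integral rule, and extracts both limits through repeated applications of L'H\^opital's rule; the decisive cancellation there appears only at the very end, where $L^{(3)}(0)-L'(0)k_\epsilon''(0)=-1$ because the two terms involving $F'(0)$ annihilate each other. You instead split $\int_u^z\psi=\Phi(z)-\Phi(u)$ to factor the double integral into products of one-dimensional integrals, which yields the exact identity $I(x)/K(x)=J-K(\ell)\,C(x)/K(x)$ and hence $\Psi(\epsilon)-\E_x[\tau_\ell\mid\tau_\ell<\tau_0]=\frac{2K(\ell)}{\epsilon^2}\,\frac{C(x)}{K(x)}$ with no error term at all; the asymptotics then reduce to the elementary facts $K(x)\sim x$, $\Phi(u)\sim u^2/(2K(\ell))$, $C(x)\sim x^3/(6K(\ell))$. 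This buys two things: the cancellation that looks accidental in the paper becomes structural, and you correctly observe that \eqref{E:AsymTime2} needs only continuity of $F$ (so that $k_\epsilon$ is continuous with $k_\epsilon(0)=1$), not the standing hypotheses \eqref{A:F} and $F'(0)>0$ nor the smoothness implicitly used by the paper to differentiate $L$ three times. One cosmetic slip: $p$ is not strictly positive on all of $[0,\ell]$ since $p(0)=0$, but $p>0$ on $(0,\ell]$ suffices for $\Psi(\epsilon)\in(0,\infty)$.
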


\medskip

Theorem \ref{T:AsymTime} implies that, fixing a noise level $\epsilon>0$,  the conditional expected hitting time $\E_{x}[\tau_{\ell}\,|\,\tau_{\ell}<\tau_0]$ stays bounded as $x\to 0$. This is in contrast to the deterministic analogue  (which tends to infinity in the order of $O(-\log x)$ as $x\to 0$ when $F'(0)>0$). Since
\begin{equation}
  \Psi(\epsilon) - \frac{x^2}{3\epsilon^2} \,\ll\, O(-\log x)\quad \text{as }x\to 0,
\end{equation}
Theorem \ref{T:AsymTime} provides a possible  explanation to the observation that  the  conditional expected hitting time is \textit{shorter} than the deterministic hitting time, mentioned in observation $\mathcal{O}$2 at the beginning of this section.
%
% Section 4
%
\section{Numerical results for the stochastic MSD system \eqref{SDE_UVB2}}
\subsection{Algorithm}
From the practical standpoint, Lemma \ref{Lm:HittingProb}, Lemma \ref{L:Ponset2} and Theorem \ref{T:AsympDistri}  presented in Section \ref{S:Theoretical} are useful for RI forecast applications, because they directly indicate the probability of RI onset occurrence as well as the variability of RI onset time. In this section, we will present numerical investigation to validate a number of theoretical results presented in Section 3, from which further examination of RI onset on various model parameters and initial conditions can be obtained. In particular, we wish to verify the variance formula \eqref{eq:Sigma} in Corollary \ref{Cor:AsympDistri}  for RI onset time because of its importance in practical applications. While this formal variance expression is mathematically significant, its direct calculation is challenging because of the matrix exponent and integration that are very sensitive to matrix operations. As such, we present in this section a numerical algorithm to compute the matrix $\Sigma(t)$ efficiently.

For the numerical purposes, we observe that the variance matrix $\Sigma(t)$, defined in \eqref{eq:Sigma}, solves the following differential equation:
\begin{align}\label{SigmaEqt}
    \frac{d \Sigma(t)}{dt} = I_{3\times 3}+A_{x_0}(t)\Sigma(t)+ \Sigma(t)A_{x_0}(t)^\intercal,
\end{align}
where the matrix $A_{x_0}(t)$ is defined in \eqref{eq:matrixA}. The above Eq. \eqref{SigmaEqt} can be indeed derived by rewriting
\begin{align}
\Sigma(t)=&\Phi_{x_0}(t) \cdot N(t) \cdot (\Phi_{x_0}(t))^\intercal, \label{Sigmaproduct}
\end{align}
where $N(t)=\left(\int_0^t \Phi_{x_0}(s)^{-1} (\Phi_{x_0}(s)^{-1})^\intercal \, ds\right)$, and $\Phi_{x_0}(t)$ is the solution of the following differential equation \cite{monter2010scaling},
\begin{align*}%[left=\empheqlbrace]
    \frac{d}{dt}\Phi_{x_0}(t)&=
     A_{x_0}(t)
    \Phi_{x_0}(t)
    \label{Linearization}\\
    \Phi_{x_0}(0)&= I_{3\times 3}.
\end{align*}
After taking derivatives in \eqref{Sigmaproduct} and applying the product rule, we get,
\begin{align}
    \frac{d}{dt}\Sigma(t)=&\frac{d}{dt}\Phi_{x_0}(t) \cdot N(t)\cdot (\Phi_{x_0}(t))^\intercal +
    \Phi_{x_0}(t) \cdot \frac{d}{dt}N(t)\cdot (\Phi_{x_0}(t))^\intercal \notag\\
    &+\Phi_{x_0}(t) \cdot N(t)\cdot \frac{d}{dt}(\Phi_{x_0}(t))^\intercal.
\end{align}
By \eqref{Linearization} and the fact that $\frac{d}{dt}N(t)= \Phi_{x_0}(t)^{-1} (\Phi_{x_0}(t)^{-1})^\intercal$, we thus have
\begin{align}\label{eq:sigmafull}
    \frac{d}{dt}\Sigma&=A_{x_0}\Phi_{x_0} \, N\, \Phi_{x_0}^\intercal +
    \Phi_{x_0} \Phi_{x_0}^{-1} (\Phi_{x_0}^{-1})^\intercal \, \Phi_{x_0}^\intercal+
  \Phi_{x_0} \, N \, \Phi_{x_0}^\intercal A_{x_0}^\intercal.
\end{align}
Using \eqref{Sigmaproduct} again and rearranging the right hand side of Eq. \eqref{eq:sigmafull}, we thus obtain Eq. \eqref{SigmaEqt} for the variant matrix $\Sigma(t)$.

The particular benefit of this differential equation approach for $\Sigma(t)$ instead of the formula \eqref{eq:Sigma} in Corollary \ref{Cor:AsympDistri}  is that it allows for integrating the matrix equation \eqref{SigmaEqt} forwards in time from any initial condition up to any given time $t$ without the need of explicitly computing the exponent of matrix integration in Eq. \eqref{eq:Sigma}. Note however that this algorithm requires computing the coefficient matrix $A(t)$ along the trajectory, which is the Jacobian matrix of the model state as seen in Eq. \eqref{eq:matrixA}. As a result, we have to integrate the deterministic model \eqref{MSD} first and store the entire trajectory $(u(t),v(t),b(t))$ before the integration of \eqref{SigmaEqt} can be carried out. 

Along with the above numerical algorithm to obtain the variance formula in Corollary \ref{Cor:AsympDistri} , Monte-Carlo simulations of the MSD model \eqref{SDE_UVB2} will be also carried out to verify Corollary \ref{Cor:AsympDistri}. For these Monte-Carlo simulations, the MSD system \eqref{SDE_UVB2} is integrated by using the Runge-Kutta fourth order scheme with time step $dt=0.001$. As mentioned in \refcite{NguyenChanhFan}, the stochastic forcing in the MSD system \eqref{SDE_UVB2} is additive with no state dependence. Thus, the Runger-Kutta scheme can be applied to the deterministic part of Eq. \eqref{SDE_UVB2}, with the stochastic forcing added at each time step. This method retains the fourth order accuracy for the deterministic part, while the stochastic accuracy order first order as for the Euler–Maruyama scheme \cite{KloedenPlaten1992}. 

Because of the random nature of stochastic forcing, all Monte-Carlo simulations in this study are carried out with 1000 realizations for each choice of initial conditions and random forcing amplitude $\epsilon$. A fixed set of parameters for the MSD model with $(p,r,s)=(200,0.25,0.1)$ similar to those used in \refcite{NguyenChanhFan} is also employed in all simulations. These parameters are typical for TCs in real atmospheric conditions as shown in \refcite{Kieu2015} and \refcite{NguyenChanhFan}. By comparing the results from the numerical integration of Eq. \eqref{SigmaEqt} and the Monte-Carlo simulations of the MSD system, the validity of the theoretical results in the previous section can be assessed. 
%
% subsection
%
\subsection{RI onset probability}
We investigate first in this subsection the probability of RI onset occurrence as presented in Lemma \ref{Lm:HittingProb}, using the Monte-Carlo simulations of the MSD system. Figure \ref{Probability1}a shows the probability on RI onset $p(u_0,v_0,b_0)$ as a function of the initial condition $v_0$. Consistent with observations \cite{Fischer_etal18}, one notices that the RI occurrence probability quickly increases with $v_0$, regardless of the random forcing amplitude $\epsilon$. For $\epsilon<10^{-2}$, the RI occurrence probability reaches the value of $\sim 1$ for all $v_0>0.05$. This means RI will be almost guaranteed to occur, because a sufficiently strong initial vortex would practically mean that a TC is well organized and so it will most likely undergo RI. 
%
% Figure 4. Add error bars
%
\begin{figure}[tbh]
\centering
\includegraphics[scale=0.38]{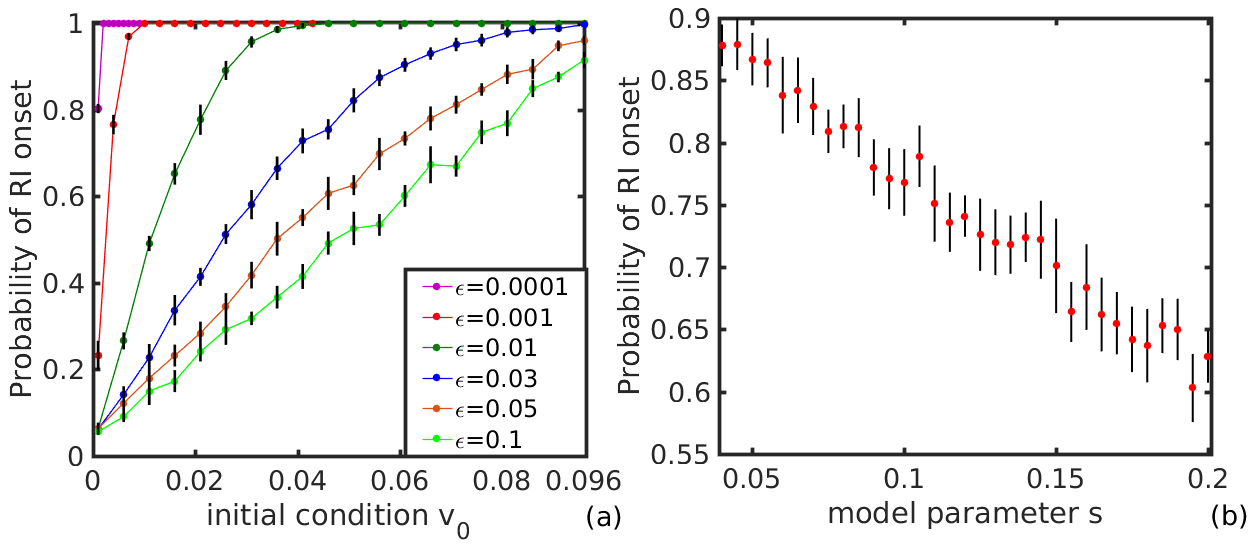}
\caption{(a) Probability of RI onset as a function of the initial wind component $v_0$, with different values of $\epsilon$ and $s=0.1$; and (b) probability of RI onset as a function of  $s$, where $v_0=0.02$ and $\epsilon=0.01$. In both figures, the remaining parameters are $(p,r)=(200,0.25)$ and $(u_0,\, b_0)=(-0.01,\,0.0001)$. 10 experiments each with 100 realizations have been done at each parameter value to achieve the error bars with 95 percent confidence interval.}
\label{Probability1}
\end{figure}

As the random fluctuation increases ($\epsilon > 0.03$), one noticed however that the probability for RI occurrence increases slower and approaches 1 only when $v_0$ is sufficiently large ($>$ 0.1). This threshold justifies the hereinafter use of $\ell=0.1$ for RI onset time in the MSD system (this level 0.1 for $v_0$ in the non-dimensional unit corresponds to $\sim 10 m s^{-1}$ in full physical dimension, which is consistent with previous idealized studies of RI onset. See e.g., \refcite{Kieu_etal2013,Kieu_etal2021}).        

Given the strong dependence of TC development on ambient environment, it is thus natural to expect that RI onset probability should be governed by not only initial conditions but also environmental factors. Among the three model parameters $(p,r,s)$, we note that $s$ is most sensitive to ambient environment because it represents the stratification of the troposphere \cite{Kieu2015,KieuWang2017a}. Thus, Figure \ref{Probability1}b shows the dependence of RI probability as a function of $s$ with fixed values for $\epsilon=0.01$, $v_0=0.02$ and all other parameters. Consistent with the previous studies on weaker intensity for more stable troposphere \cite{Shen_etal2000,HillLackmann2011,Tuleya_etal2016,MoonKieu2017,Ferrara_etal2017,KieuZhang2018,DownsKieu2020}, one notices in Figure \ref{Probability1}b that RI onset probability decreases quickly as $s$ is larger (i.e., the troposphere becomes more stable). Given the same initial vortex strength, an increase of $s$ from 0.1 to 0.2 could reduce the RI onset probability from 80 to 60\%, which is substantial in operational forecast. For smaller values of $v_0$, this drop in RI onset probability is even much faster. The implication of this result is significant, as it suggests that the environmental static stability is a key parameter not only for the TC maximum intensity, but also for RI onset prediction.

A different way to examine the sensitivity of RI onset probability in operational practice is to determine what value of the initial TC strength $v_0$ would allow for at least, e.g., 80\% RI probability as a function of the random magnitude $\epsilon$. This 80\% threshold is generally sufficient for most practical purposes to ensure that RI onset will be very likely to occur, from which timely risk management can be prepared. 

In this regard, Figure \ref{Probability2} shows the minimum initial TC strength $I^{\epsilon}_0$ to meet the 80\% RI onset probability threshold as a function of $\epsilon$. 
Here, we define $I^{\epsilon}_0$, which can be considered as an \textbf{RI onset indicator}, as the unique number within $(0,\ell)$ such that
\begin{equation}\label{Def:Indicator}
p(u_0,\,I^{\epsilon}_0,\,b_0) =0.8.
\end{equation}
%
% Figure 5. Add error bars
%
\begin{figure}[tbh]
\centering
\includegraphics[scale=0.4]{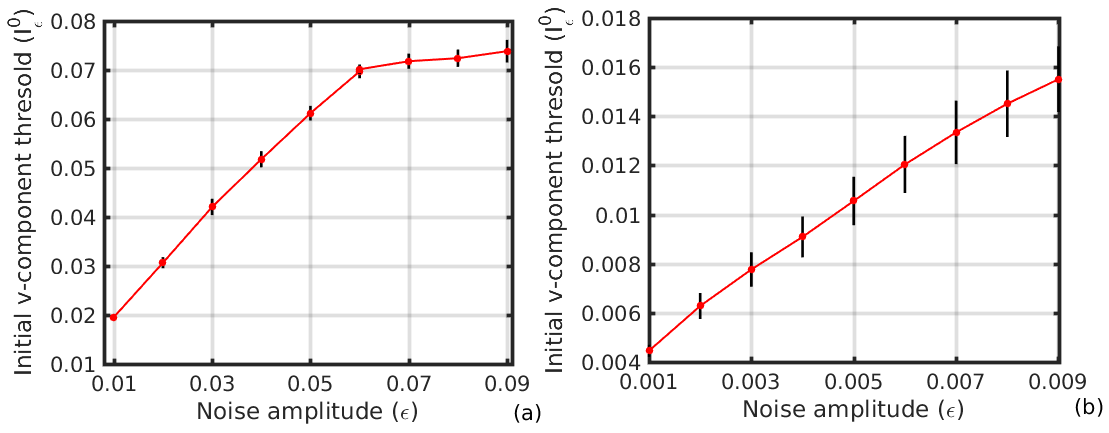}
\caption{Dependence of the smallest value of $v_0$ at which the probability of RI onset reaches a $0.8$ level, denoted as $I^{\epsilon}_0$ defined in \eqref{Def:Indicator}, on different ranges of the noise level $\epsilon$ including (a) $\epsilon \in [0-0.1]$, and (b) a zoom in for $\epsilon \in [0.001-0.01]$. Other parameter settings include $(p,r,s)=(200, 0.25, 0.1)$ and  $u_0=-0.01,\,b_0=0.0001$.}
\label{Probability2}
\end{figure}

Consistent with our theoretical results, $I^{\epsilon}_0$ increases linearly with $\epsilon$ when $\epsilon$ is small. It also appears that $I^{\epsilon}_0$ levels off for $\epsilon > 0.07$. The limit of a small $\epsilon$ is of interest, as it reveals that the MSD system behaves similarly to an one-dimensional stochastic system with autonomous forcing as presented in Section 3.3. As explained in Remark \ref{Rk:asymP}, Theorem \ref{T:asymP} shows that the linear dependence of $I^{\epsilon}_0$ on $\epsilon$ is always valid for a very general one-dimensional system, so long as the forcing does not vary much prior to RI onset. This is applied well to the MSD system as seen, e.g., in Figure \ref{fig1}, which shows that TCs evolves very slowly during the pre-RI onset period. Physically, this result confirms that a larger random noise would require a stronger initial intensity so that RI onset can be more likely to occur. Note that when the initial intensity is sufficiently large, the random noise will have less of an impact because the RI onset will almost guarantee to occur (at a 80\% level) for those initially strong intensity states. 
%
% subsection 
%
\subsection{RI onset timing variability}
Given the probability of RI onset occurrence as presented in the previous section, we wish to verify next the distribution of RI onset time as given by Theorem \ref{T:AsympDistri} and related corollary \ref{Cor:AsympDistri}. Because RI onset is almost guaranteed to occur when $v_0$ is sufficiently large as shown in Figure \ref{Probability2}, we will consider a specific case in which the hitting level $\ell$ for RI onset (i.e. $v$ component) is $\ell=0.1$.

Similar to the previous section, our main focus herein will be again on how the distribution of $\tau_+$ changes with the initial condition for the $v$ component (i.e., $v_0$), while keeping the other two components ($u_0,b_0$) at the same values of $u_0=-10^{-2}, b_0=10^{-4}$. This is because $v_0$ practically represents the intensity of a TC vortex during its initial stage of development. During this tropical disturbance stage, there is no strong dynamical constraint among the scales of TCs and one can therefore assign relatively independent values for $u_0,v_0,b_0$. As the tropical disturbance grows, its dynamics will be however governed by the TC scale dynamics and they cannot evolve independently. 

To have a broad picture of the variability of RI onset time, Figure \ref{Histogram} shows the histograms of $\tau_{+}$ for a range of $v_0$ and $\epsilon$. Here, these histograms are constructed from 1000 Monte-Carlo simulations, using the default values for the parameters and initial conditions as mentioned in Section 4.1. One notices in Figure \ref{Histogram} an expected behavior of the $\tau_+$ variability, with a narrower distribution of $\tau_{+}$ for smaller $\epsilon$ when $v_0 \geq 0.01$. That is, a smaller random forcing would result in less variability in RI onset timing, which is consistent with real TC development. 

Of further interest from Figure \ref{Histogram} is that for each fixed initial condition $v_0$ (i.e. for each row), the conditional distribution of $\tau_+$ gets closer to a probability density function centered around the deterministic onset time $T$ defined in \eqref{Def:T_v} as $\epsilon\to 0$. This indicates that the deterministic RI onset forecast will be more reliable for either smaller stochastic noises or stronger initial intensity. 
%
% Figure 6
%
\begin{figure}[tbh]
\centering
\includegraphics[scale=0.36]{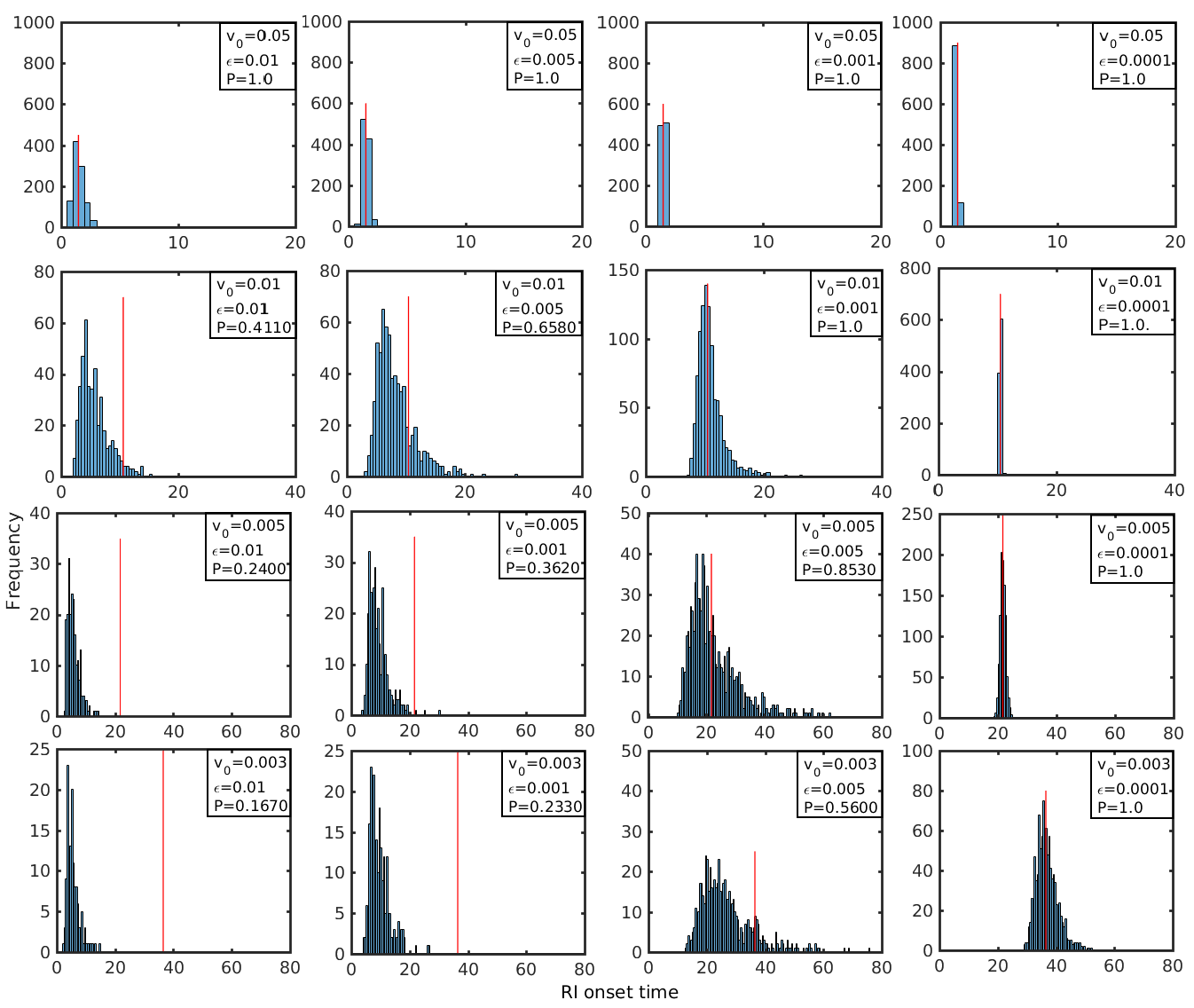}
\caption{Histograms of RI onset time $\tau_+$ defined in  \eqref{tau+} for various values of initial conditions $v_0$ and the noise amplitude $\epsilon$, conditioned on the event $\{\tau_+<\tau_0\}$. Note that for each 1000 realizations of the stochastic system \eqref{SDE_UVB2}, only a fraction $P$ of them hit $\{v=0.1\}$ before hitting $\{v=0\}$ and so only these trajectories are counted (these probability values $P$ are given in the upper right boxes). The red vertical line shows the time $T$,  defined in \eqref{Def:T_v}, obtained from the deterministic MSD system as $v(t)$ hits level $v=0.1$. In all histograms, the parameters are $p=200, r=0.25, s=0.1$, the initial values are $u_0=-0.01, b_0=0.0001$ are used.}
\label{Histogram}
\end{figure}

For very small values of $v_0$ (i.e., weaker initial intensity), the center of the $\tau_+$ distribution is shifted farther away from the deterministic time $T$ as $\epsilon$ increases (see the lower left panels in Figure \ref{Histogram}. This is because random fluctuations, which are proportional to $\epsilon$, are now much larger than the initial condition that TC development is no longer determined by $v_0$. Instead, the variability of $\tau_+$ is more a result of $\epsilon$ alone. So long as $v_0 \ll \epsilon \ll 1$, the TC initial condition becomes irrelevant to RI onset. This characteristic of RI onset timing uncertainty is also consistent with the probability of RI onset occurrence shown Figure \ref{Histogram} (see the RI onset probability $P$ in the upper right boxes). 

From the mathematical perspective, the above behavior of the MSD system for the limit of small $v_0$ can be understood by again using the a general one-dimensional SDE model presented in Section 3.3. So long as TC dynamics evolves slowly prior to RI onset, one can obtain an exact dependence of the center of $\tau_+$ histogram on $\epsilon$ in terms of the stochastic conditioned diffusion process (see Lemma 2). That is, the random noise in the MSD system induces a modified drift along the gradient of probability density, which results in a faster approach to the $\ell$ level as shown in Figure \ref{Histogram}. Thus, a smaller value of $v_0$ indicates less likely for RI to occur. For $\epsilon$ sufficiently larger than $v_0$, the probability $P$ for RI onset occurrence is quickly reduced below $50\%$, regardless of value of $v_0$ (see lower left panels in Figure \ref{Histogram}).   

To facilitate our comparison of the above results obtained from the Monte-Carlo simulations with that from Theorem \ref{T:AsympDistri} , the dependence of the variance of RI onset time on $v_0$ for each value of $\epsilon$ is summarized in Figure \ref{VarianceNumerics}. Consistent with that shown in Figure \ref{Histogram}, the variance of $\tau_+$ decreases for larger $v_0$ all range of $\epsilon$ as expected. Note that the conditional variance of $\tau_+$ (conditioned on occurrence of RI onset) also increases as $\epsilon$ increases, suggesting that the variability of the RI onset becomes larger when the amplitude of random forcing increases.
%
% Figure 6
%
\begin{figure}[tbh]
\centering
\includegraphics[scale=0.35]{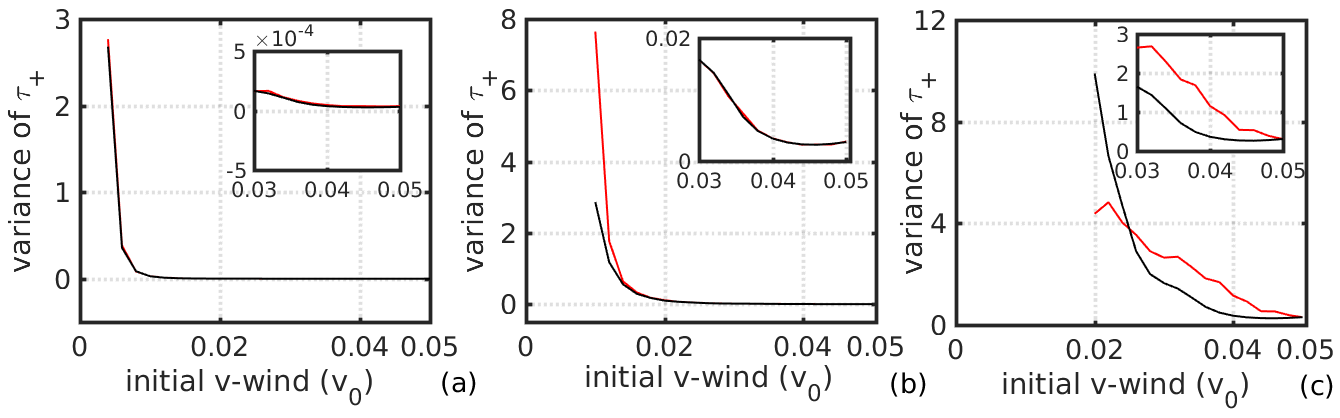}
 \caption{(a) Diagram shows the  variance of the RI onset time $\tau_+$ conditioned on RI onset occurrence  as a function of $v_0$ for noise $\epsilon=10^{-4}$, which is obtained from Corollary \ref{Cor:AsympDistri}  (black) and from Monte-Carlo simulation (red). (b)-(c) Similar to (a) but for $\epsilon=10^{-3}$, and $\epsilon=10^{-2}$. Upper right corner panels show the zoom in of these $Var(\tau_+|\tau_+<\tau_0)$ values for $v_0 \in [0.03-0.05]$.}
\label{VarianceNumerics}
\end{figure}

Comparing to the conditional variance of $\tau_+$ obtained from Corollary \ref{Cor:AsympDistri}  using the numerical integration of Eq. \eqref{eq:sigmafull} (see the black curves in Figure \ref{VarianceNumerics}), it is evident that Corollary \ref{Cor:AsympDistri}  captures  consistent characteristics of the conditional variance of $\tau_+$  as a function of $\epsilon$. This is especially true when $\epsilon$ is much smaller than $v_0$ (Figure \ref{VarianceNumerics}a-b), which show a good match between Corollary \ref{Cor:AsympDistri}  and the Monte-Carlo simulations. 
For a larger value of $\epsilon \ge 0.01$, Corollary \ref{Cor:AsympDistri}  starts to diverge from the Monte-Carlo simulation (Figure \ref{VarianceNumerics}c), which tends to underestimate the conditional variance of $\tau_+$ as $v_0$ becomes larger. In this regard, the Monte-Carlo simulations confirms the validity of Corollary \ref{Cor:AsympDistri}  for small limit of $\epsilon \le 10^{-3}$. This result gives us information about what regime of random noise that the theoretical estimation could provide an meaningful dependence of $Var(\tau_{+}\,|\,\tau_+<\tau_0)$ on $v_0$. 

From the practical perspective, the fact that the variability of RI onset timing decreases rapidly for an initially stronger intensity (i.e., a larger value of $v_0$) would suggest that our ability to predict RI onset will be improved as TCs become stronger. This accords with previous observational and modelling studies \cite{TaoJiang2015,Fischer_etal18}, which showed indeed an overall improved RI forecasts as TCs become more organized. From this perspective, Theorem \ref{T:AsympDistri}  is anticipated and useful for further examination of the dependence of $\tau_+$ as well as its variance on different model parameters without the requirement of intensive Monte-Carlo simulations.    
%
% subsection
%
\subsection{Model parameter dependence}
Given the validity domain of Corollary \ref{Cor:AsympDistri}  as established in the previous section, one can now use the explicit expression for the conditional variance of $\tau_+$ in Corollary \ref{Cor:AsympDistri}  to study how the uncertainties of RI onset time varies with different model parameters and/or initial conditions. This information is substantial, because it can help forecasters estimate the uncertainties of their RI onset prediction in real-time forecast.

Recall however that the dependence of $\eqref{Formula_Var}$ on different parameters is most useful if an estimation of the deterministic RI onset time $T$, say from a numerical or a statistical model, is given. As a result, Figure  \ref{CriticalTimeT} show the deterministic onset time $T$ for different initial condition $v_0$ and model parameters $(p,r,s)$. Here, the same hitting level $\ell=0.1$ at which the RI onset is considered to occur as $v$ crosses $\ell$ for the first time is used. 

As shown in Figure \ref{CriticalTimeT}a, $T$ is inversely proportional to $v_0$ as expected, which implies that RI onset will occur earlier for stronger initial intensity. When fixing TC initial condition, we note however that $T$ increases roughly linearly when the model parameters $s$ or $r$ increases. This linear relationship  indicates that a more stable troposphere or stronger radiative cooling will slow down RI onset as seen in Figure \ref{CriticalTimeT}b-c. In contrast, RI onset occurs earlier for a larger parameter $p$ (Figure \ref{CriticalTimeT}d), suggesting  a bigger storm size would requires less time for RI to take place. These behaviors can be used to validate our results, using observational data or modelling output that we will present in our future study.  
%
% Figure 7
%
\begin{figure}[tbh]
\centering
\includegraphics[scale=0.5]{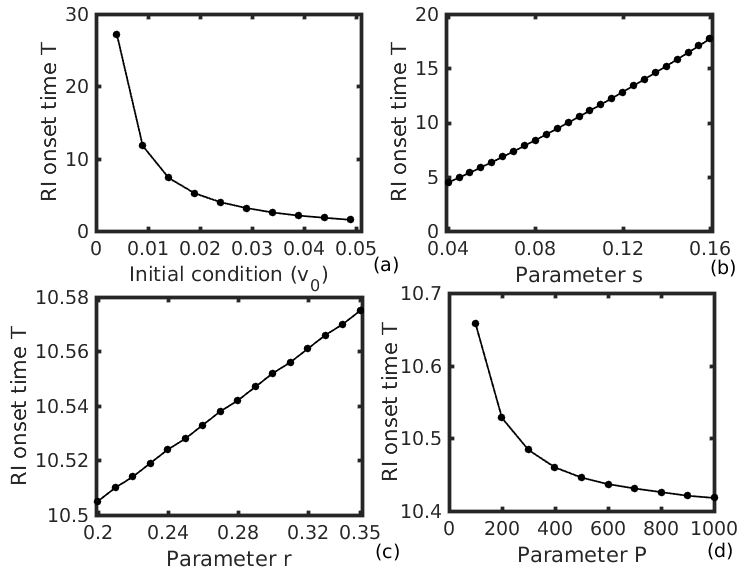}
\caption{Dependence of the deterministic RI onset time $T$ on (a) the initial condition $v_0$, (b) the atmospheric static stability parameter $s$, (c) the radiative cooling parameter $r$, and (d) the aspect ratio of the tropospheric depth over the radius of maximum wind $p$. Note that for each parameter curve, all other parameters are fixed at the values of $p=200$, $s=0.1, r=0.25$, $u_0=-0.01, v_0=0.01, b_0=0.0001$.}
\label{CriticalTimeT}
\end{figure}
%
% Figure 8
%
\begin{figure}[tbh]
\centering
\includegraphics[scale=0.5]{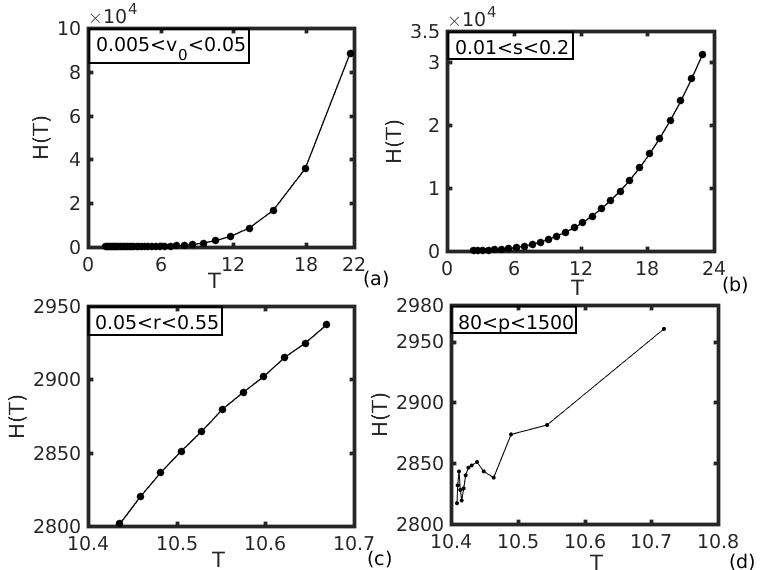}
\caption{Similar to Figure \ref{CriticalTimeT} but for the function $H(T)$ in the variance formula of Corollary \ref{Cor:AsympDistri} .}
\label{HvsParameter}
\end{figure}

Note that among all the model parameters, the conditional variance of $\tau_+$, which is represented by the function $H(T) \equiv \frac{\Sigma_{22}(T)}{\ell^2\,[u(T)+\ell]^2}$, appears to be the least sensitive to changes in the parameter $p$. On the other hand, the conditional variance of $\tau_+$ tends to be sensitive to both $r$ or $s$ (Figure.~\ref{CriticalTimeT}c-d). This sensitivity of $H(T)$ to $r$ and $s$ accords with real TC development, thus providing further understanding into the large-scale environmental factors that could affect RI onset variability, for which predictive models of RI onset must take into account in future implementation. 
%
% Section 5
%
\section{Conclusion}
In this paper, the rapid intensification (RI) process during tropical cyclone (TC) development was examined, using the first hitting time and asymptotic analysis for stochastic systems. By extending the TC-scale dynamical model (MSD) for TC development proposed by \refcite{Kieu2015}, RI can be considered as a random process whose onset time possesses a specific probability distribution dictated by TC dynamics. The reduced dynamics of the MSD model in the phase space of three state components $(u,v,b)$ makes it especially attractive for RI examination, as one can obtain  analytical results that could not be obtained otherwise with full-physics models.  

%In the limit of sufficiently small stochastic forcing ($\epsilon \rightarrow 0$), several important results could be derived. 

Specifically, by defining RI onset time as the first moment that TC intensity hits a given level $\ell$, a formal procedure to derive the RI onset probability $p(x)$ was obtained in Lemma \ref{Lm:HittingProb} through a boundary value problem. While the explicit expression for $p(x)$ has to be relied on numerical integration, its asymptotic limit $\epsilon \rightarrow 0$ could indicate that the probability of RI onset will be ensured for all initial conditions with $v_0>0$, consistent with previous modelling studies of TC development.          

Conditioned on the RI onset occurrence, we show that the timing for RI onset ($\tau_+$) would be on average longer for weaker vortex initial condition $v_0$. Also, RI onset timing will have larger variance (uncertainty) when the stochastic amplitude $\epsilon$ increases, with an asymptotic variance formula given by Corollary \ref{Cor:AsympDistri} in the small noise regime. In this small noise regime, we also demonstrated in Figure \ref{Histogram} that a larger random forcing $\epsilon$  would potentially imply a smaller probability for RI onset and a  smaller conditional expectation for RI onset time.  The latter observation is important because it helps alert forecasters a possible RI onset taking place quicker in the presence of stronger random fluctuation.

The main mathematical result regarding the variability of the RI onset time $\tau_+$ is provided by Corollary \ref{Cor:AsympDistri}, which presents an asymptotic formula for the conditional variance of $\tau_+$ in the small noise regime. Detailed examination of this variance formula using our efficient algorithm to numerically compute it from any given initial state showed that the variability of RI onset timing depends critically on TC initial intensity as well as model parameters. For a fixed set of model parameters, the variance of RI onset time decreases with initial intensity $v_0$. That is, an initially stronger vortex would experience not only earlier RI onset time but also less uncertainty in the prediction of the timing of RI onset. Similarly, the uncertainties in RI onset time will be smaller when the key model parameters such as  atmospheric stability ($s$) or the aspect ratio ($p$) decreases, suggesting a strong dependence of the RI onset forecast on the atmospheric large-scale condition.        

To examine the domain of validity of our theoretical results, Monte-Carlo simulations of the MSD system were also conducted, using the same set of parameters and initial conditions as those obtained from the theoretical analyses. Our examination of these Monte-Carlo simulations for different asymptotic limits of random noise amplitude $\epsilon$ confirmed the validity of the theoretical results for the limit of $\epsilon \rightarrow 0$. These simulations helped verify several hypotheses that were assumed in our lemmas and theorems, thus providing a broad picture of what limits our theoretical results can be applied in real TC systems.     

From the mathematical perspective, it should be noted that several results on RI onset probability and timing obtained from Monte-Carlo simulations can be intriguingly understood by using a very generic one-dimensional (1D) stochastic system. Our analyses of a general 1D stochastic equation could in fact capture well key properties of the probability distribution of RI onset as well as the timing of RI onset. In this regard, these analyses suggest that the method and results in this study can be readily applied to a more general stochastic system that possesses a first hitting time characteristic, so long as the evolution of the system prior to a rapid change in the system can be considered as a slow process. Further exploration of the first hitting time for a general 1D stochastic system will be presented in our future work. 
%
% APPENDIX
%
%\medskip
%{\color{blue}
%For the reader's convenience, the main definitions in this paper are listed below. 
%\medskip
%\centering
%\begin{tabular}{ | p{2.8cm} | p{7cm} |p{1.6cm} | }
%	\hline \hline 
%	Symbol & Description & Defined in \\ 
%	\hline \hline
%	$U_t$ & maximum radial wind at time $t$ & \eqref{SDE_UVB2} \\
%	\hline 
%	$V_t$ & maximum tangential wind  & \eqref{SDE_UVB2} \\
%	\hline 
%	$B_t$ & warm core anomaly in the TC inner-core  & \eqref{SDE_UVB2} \\
%	\hline 
%	$\tau_+$ & RI onset time for \eqref{SDE_UVB2}  & \eqref{tau+} \\
%	\hline 
%	$\tau_0$ & first time when $V$ crosses zero level & \eqref{tau0} \\
%	\hline 
%	$\{\tau_+<\tau_0\}$ & event of RI onset occurrence & NA \\
%	\hline 
%	$\P_x(\tau_+<\tau_0)$ & probability of RI onset, starting at $x$ & %\eqref{Def:px} \\
%	\hline 
%	$\E_x[\tau_+\,|\,\tau_+<\tau_0]$ & conditional expectation of $\tau_+$, starting at $x$ & \eqref{CondEx}\\
%	\hline 
%	$Var_x[\tau_+\,|\,\tau_+<\tau_0]$ & conditional variance of $\tau_+$, starting at $x$ & \eqref{Formula_Var}\\
%	\hline 
%	$T$ & deterministic RI onset time for ODE \eqref{Eg1} & \eqref{Def:T_v} \\
%	\hline 
%	$I^{\epsilon}_0$ & RI onset indicator & \eqref{Def:Indicator} \\
%	\hline 
%\end{tabular}
%}

\newpage
\appendix
\section*{APPENDIX: Proofs for general 1-dimensional diffusions}\label{S:proof}
In this section, we provide the proofs of our results in Section \ref{S:1-dim}. The following asymptotic properties for the error function  will be useful in several places in our proofs:
For all $c\in(0,\infty)$, as $\epsilon\to 0$, we have
\begin{equation}\label{ErrorCasesAlpha}
\erf(c\,\epsilon^{\alpha-1}) \sim
\begin{cases}
1&\quad\text{if }\alpha \in(0,1) \\
 \erf(c\,)&\quad\text{if }\alpha = 1\\
\frac{2c\,}{\sqrt{\pi}} \epsilon^{\alpha-1}&\quad\text{if }\alpha\in(1,\infty)
\end{cases}.
\end{equation}
Here $A\sim B$ means $\lim_{\epsilon\to 0}A/B=1$.

\medskip

\begin{proof}[Proof of Lemma \ref{Lm:HittingProb_1d}]
Let $h(x)= \P_x(\tau_{\ell}<\tau_0)$. Then as in the proof of Lemma \ref{Lm:HittingProb}, the function $h$ 
satisfies the following boundary value problem
\begin{align*}%[left=\empheqlbrace]\label{Hitting Probs}
    \frac{\epsilon^2}{2}h''(x)+F(x)h'(x)=0 \qquad & \text{if}\quad 0<x<\ell\\
    h(\ell)=1 \quad \text{and}\quad h(0)=0.
\end{align*}
Upon solving this equation for $h$ using the  integrating factor $k_{\epsilon}$ given by \eqref{Def:kepsilon}, we obtain the desired formula.
\end{proof}

\medskip

\begin{proof}[Proof of Theorem \ref{T:asymP}]
This result and the proof is  similar to that of \cite[Lemma 6]{mcloone2018stochasticity}, which is a variation of the Laplace method. For all $t\in(0,\infty)$ there exists $\xi_t \in (0,t)$ such that
\begin{equation}\label{TaylorExpforF}
    F(t)= F'(0)t+ \frac{F''(\xi_t)}{2}t^2.
\end{equation}
To simplify notation we introduce 
\begin{align}
f_1(y)&:=\int_0^yF(t)\,dt\label{f1}\\
    f_2(y)&:=\int_0^y F'(0)t \,dt = F'(0)\frac{y^2}{2}\label{f2}\\
 I(x)&:= \int_0^{x} \exp \left(\frac{-2}{\epsilon^2}f_1(y)\right)dy\notag\\
    \widetilde{I}(x)&:= \int_0^{x} \exp \left(\frac{-2}{\epsilon^2}f_2(y)\right)dy.\notag
\end{align}

By Lemma \ref{Lm:HittingProb_1d},
\begin{equation}\label{E:HittingProb_1d}
    \P_{c\,\epsilon^{\alpha}}(\tau_{\ell}<\tau_0)
    %=\frac{\int_{0}^{c\,\epsilon^{\alpha}} \exp{\left(-\frac{2}{\epsilon^2}\int_0^yF(t)dt\right)}dy}{\int_{0}^\ell \exp{\left(-\frac{2}{\epsilon^2}\int_0^yF(t)dt\right)}dy}
    =\frac{I(c\,\epsilon^{\alpha})}{I(\ell)}.
\end{equation}

For all $y>0$ we have,
$|f_1(y)-f_2(y)|=\left|\int_0^yF(t)\,dt-\int_0^yF'(0)t \,dt\right|\leq
      \frac{|F''(\xi_t)|}{6} y^3$.
Hence for $y<c \epsilon^\alpha$,
\begin{align}\label{f1-f2 estimate}
    |f_1(y)-f_2(y)| \leq \frac{|F''(\xi_t)|}{6} c^3\epsilon^{3\alpha}.
\end{align}
Since $F$ is smooth, $M:=\displaystyle\sup_{y\in[0,c \epsilon^{\alpha}]}|F''(y)|<\infty$. Therefore
\begin{align*}
   \left|I(c\,\epsilon^\alpha)-\widetilde{I}(c\,\epsilon^\alpha) \right|
   &=\left|\int_{0}^{c\,\epsilon^\alpha} \exp{\left(\frac{-2}{\epsilon^2}f_1(y)\right)}-\exp{\left(\frac{-2}{\epsilon^2}f_2(y)\right)}dy\right|\\
   &= \left|\int_{0}^{c\,\epsilon^\alpha} \int_{[f_1(y),f_2(y)]} \frac{-2}{\epsilon^2} e^{\frac{-2}{\epsilon^2}x}\,dx\,dy\right| \\
   &\leq \frac{2}{\epsilon^2} \int_0^{c\epsilon^\alpha}\sup_{[f_1(y),\,f_2(y)]} e^{\frac{-2}{\epsilon^2}x} \cdot |f_1(y)-f_2(y)| \,dy \\
   &\leq \frac{c^4}{3} \, M \epsilon^{2(2\alpha -1)}
\end{align*}
where in the last step we used \eqref{f1-f2 estimate}. From this we have
\begin{equation}\label{ErrorItI}
\left|\frac{I(c\,\epsilon^\alpha)-\widetilde{I}(c\,\epsilon^\alpha)}{I(\ell)}\right|\leq \frac{c^4}{3}\, M \frac{\epsilon^{4\alpha -3}}{\epsilon^{-1}I(\ell)}.
\end{equation}

For the integral $I(\ell)$ in the denominator, note that $g(y):=-2f_1(y)$ is decreasing in $[0,\ell]$ and thus it has maximum at $c=0$ the $g(0)=0$. Note also that $g'(0)=-2F(0)=0$. By Laplace method,  as $\epsilon\to 0$
\begin{align}\label{LaplaceExpansionDenom}
    I(\ell)= \int_{0}^{\ell} \exp \left(\frac{-2}{\epsilon^2}f_1(y)\right)\,dy \sim
    \sqrt{\frac{\pi}{F'(0)}} \frac{\epsilon}{2}.
\end{align}

From this and \eqref{ErrorItI}, we see that for $\alpha>\frac34$, 
\begin{equation}\label{LimitEqn}
    \lim_{\epsilon \rightarrow 0} \frac{I(c\,\epsilon^\alpha)}{I(\ell)}=\lim_{\epsilon \rightarrow 0}\frac{\widetilde{I} (c\,\epsilon^\alpha)}{I(\ell)}.
\end{equation}

%Letting $\epsilon \rightarrow 0$ in the above and using \eqref{LaplaceExpansionDenom} the right hand side goes to zero, and  \eqref{LimitEqn} is proved.

By the change of variable $u=\sqrt{F'(0)} \frac{y}{\epsilon}$,
\begin{align*}
   \widetilde{I}(c\,\epsilon^\alpha)
   %= \int_{0}^{c\,\epsilon^\alpha} \exp{\left(-\frac{1}{\epsilon^2}\int_0^y2F'(0)t\,dt\right)}\,dy 
   &=\int_0^{c\,\epsilon^\alpha} \exp{\left( \frac{-F'(0)}{\epsilon^2} y^2\right)}\,dy\\
   &=\frac{\epsilon}{\sqrt{F'(0)}} \int_0^{c\, \sqrt{F'(0)}\epsilon^{\alpha-1}} e^{-u^2} \,du\\
   &=\frac{\epsilon}{2}\sqrt{\frac{\pi}{F'(0)}} \erf (c\, \sqrt{F'(0)}\epsilon^{\alpha-1}).
\end{align*}
So for $\alpha>\frac34$, as $\epsilon\to 0$, by \eqref{LaplaceExpansionDenom} we have
\begin{align}\label{E:HittingProb_1d_2}
    \frac{I(c\,\epsilon^\alpha)}{I(\ell)}
    \sim \frac{\widetilde{I}(c\,\epsilon^\alpha)}{I(\ell)}
    = 
     \frac{\frac{\epsilon}{2}\sqrt{\frac{\pi}{F'(0)}}}{I(\ell)}  \erf(c\, \sqrt{F'(0)}\epsilon^{\alpha-1})\sim  
    \erf(c\, \sqrt{F'(0)}\epsilon^{\alpha-1})
\end{align}
where in the last step, we used \eqref{LaplaceExpansionDenom}.
%At this point we recall the approximation of the error function using the Burmann's theorem.
%\begin{equation}
%    \erf(x)= \frac{2}{\sqrt{\pi}}\sign(x) \sqrt{1-e^{-x^2}}\left( \frac{\sqrt{\pi}}{2} +\sum_{k=1}^\infty c_k e^{-kx^2}\right).
%\end{equation}
%In the case that $\frac34 <\alpha<1$ we have that for all positive integer $k$, $e^{-k c\, ^2 \epsilon^{2(\alpha-1)}} \rightarrow 0$ as $\epsilon \rightarrow 0$. 
%Therefore,
%\begin{equation}
%    \lim_{\epsilon \rightarrow 0}\erf(c\, \sqrt{F'(0)}\epsilon^{\alpha-1})=\frac{2}{\sqrt{\pi}}\cdot 1\cdot  \sqrt{1-0}\left( \frac{\sqrt{\pi}}{2} + 0\right) = 1.
%\end{equation}
From \eqref{ErrorCasesAlpha}, \eqref{E:HittingProb_1d} and \eqref{E:HittingProb_1d_2}, we obtain the desired equality for $\alpha>3/4$.
%\begin{align}\label{OrderOfError}
 %   \lim_{\epsilon \rightarrow 0}\erf(c\, \sqrt{F'(0)}\epsilon^{\alpha-1})=
%    \begin{cases}
%    1,  & \text{if }\frac{3}{4}<\alpha<1\\
%  \erf(c\, \sqrt{F'(0)}), &\text{if }\alpha=1\\
%   0, &\text{if }\alpha>1
%    \end{cases}
%\end{align}

The remaining case $\alpha\in (0,3/4]$ is covered, because
 $\P_x(\tau_{\ell}<\tau_0)$ is monotonically increasing in $x$ and $\epsilon^{\alpha_1}<\epsilon^{\alpha_1}$ if $\alpha_1>\alpha_2>0$.  The proof is complete.
\end{proof}

\medskip

\begin{proof}[Proof of Lemma \ref{Lem:ConEXP 1-d}]
Let  $\tau=\min\{\tau_{\ell},\,\tau_0\}=\inf\{t\geq 0:\,Z_t=0 \text{ or } \ell\}$  be the time to exit the interval $(0,\ell)$. We shall show that
for any starting point $x\in[0,\ell]$,
\begin{equation}\label{E:RestrictedEXP 1-d}
\E_{x}[\tau 1_{\{\tau_\ell<\tau_0\}}]=\frac{2}{\epsilon^2} \frac{1}{\int_0^\ell k_\epsilon(z) dz} 
   \left\{ \int_x^\ell \int_0^x k_\epsilon(z) k_\epsilon(u) \int_u^z  \frac{p(y)}{k_\epsilon(y)}\, dy \,du \,dz\right\}.
\end{equation}

Recall that $p(x)=\P_x(\tau_\ell<\tau_0)$ and that by Lemma \ref{Lm:HittingProb_1d},
$p(x)=\frac{\int_{0}^x k_\epsilon(y)\,dy}{\int_{0}^\ell k_\epsilon(y)\,dy}$. %where$k_\epsilon(y)=\exp\left\{-\frac{2}{\epsilon^2}\int_0^yF(t)dt\right\}$.
The function $H$ defined by
$H(x)=\E_{x}[\tau \,1_{\{\tau_\ell<\tau_0\}}]$
solves the boundary value problem
 \begin{align}%[left=\empheqlbrace]
   p(x) + \frac{\epsilon^2}{2}H''(x)+F(x)\,H'(x)&=0\qquad \text{if}\quad 0<x<\ell \label{Expected H1}\\
    H(0)=H(\ell)&=0\label{Expected H2}.
\end{align}

We now solve \eqref{Expected H1}-\eqref{Expected H2} to obtain \eqref{E:RestrictedEXP 1-d}.
Note that \eqref{Expected H1} is a first order equation in $H'(x)$, given by
\begin{align}
% \frac{\epsilon^2}{2}H''(x)+F(x)H'(x)=-p(x)\\
    H''(x)+\frac{2}{\epsilon^2}F(x)H'(x)=\frac{-2}{\epsilon^2}p(x).
\end{align}
Multiply both sides by the integrating factor $k_\epsilon^{-1}(x)=\exp{\left\{\int_0^x \frac{2}{\epsilon^2}F(t)\,dt \right\}}$,
    \begin{align*}
         [H'(x)k_\epsilon^{-1}(x)]'&=\frac{-2}{\epsilon^2} k_\epsilon^{-1}(x)p(x)\\
         %H'(x)k_\epsilon^{-1}(x)&=\frac{-2}{\epsilon^2}\int_0^x k_\epsilon^{-1}(y)p(y) dy+C\\
        H'(x)&=\frac{-2}{\epsilon^2} k_\epsilon(x)K(x) +Ck_\epsilon(x),    %G'(z)&=\frac{1}{\delta(z)}\int_0^z-\frac{2}{\epsilon^2} \delta(y) dy +g(0)\frac{1}{\delta(z)}/
    \end{align*}
where we let $K(x)=\int_0^x k_\epsilon^{-1}(y)p(y)\, dy$ for simplicity.
    
Integrating again and using the fact $H(0)=0$, we have
\[
 H(x)-0=-\frac{2}{\epsilon^2}\int_0^x k_\epsilon(z)\,K(z) \,dz +C\int_0^x k_\epsilon(z) \,dz.
\]
To compute $C$ we use the fact that $H(\ell)=0$. We find that
\[C=\frac{\frac{2}{\epsilon^2}\int_0^\ell k_\epsilon(z)\,K(z) \,dz}{\int_0^\ell k_\epsilon(z) dz}.
\]
From the last two displayed equations,
\begin{align}
   H(x)=&-\frac{2}{\epsilon^2}\int_0^x k_\epsilon(z)\,K(z) \,dz +  \frac{\frac{2}{\epsilon^2}\int_0^\ell k_\epsilon(z)\,K(z) \,dz}{\int_0^\ell k_\epsilon(z) dz} \cdot \int_0^x k_\epsilon(z) \,dz \notag\\
    =&\frac{2}{\epsilon^2} \left\{ \int_0^\ell k_\epsilon(z)\,K(z) \,dz \cdot \frac{\int_0^x k_\epsilon(z) \,dz}{\int_0^\ell k_\epsilon(z) dz} - \int_0^x k_\epsilon(z)\,K(z) \,dz   \right\} \notag\\ 
    =&\frac{2}{\epsilon^2} \left\{ \int_0^\ell k_\epsilon(z)\,K(z) \,dz \cdot p(x) - \int_0^x k_\epsilon(z)\,K(z) \,dz   \right\} \label{Expectationsubset}
\end{align}

 We now rewrite \eqref{Expectationsubset} in a way that reflects why the complicated expression on the right is non-negative.
 \begin{align*}
   H(x)&=\frac{2}{\epsilon^2} \left\{ \int_0^\ell k_\epsilon(z)\,K(z) \,dz \cdot \frac{\int_0^x k_\epsilon(z) \,dz}{\int_0^\ell k_\epsilon(z) dz} - \int_0^x k_\epsilon(z)\,K(z) \,dz   \right\}\\
   &=\frac{2}{\epsilon^2} \frac{1}{\int_0^\ell k_\epsilon(z) dz} \left\{ \int_0^\ell k_\epsilon(z) K(z)\,dz  \int_0^x k_\epsilon(z) \,dz- \int_0^\ell k_\epsilon(z) dz \int_0^x k_\epsilon(z) K(z) \,dz   \right\}\\
   %&=\frac{2}{\epsilon^2} \frac{1}{\int_0^\ell k_\epsilon(z) dz} \left\{ \int_0^\ell \int_0^x k_\epsilon(z) k_\epsilon(u) K(z) \,du \,dz - \int_0^\ell \int_0^x k_\epsilon(z) k_\epsilon(u) K(z) \,dz \,du    \right\}\\
   &=\frac{2}{\epsilon^2} \frac{1}{\int_0^\ell k_\epsilon(z) dz} 
   \left\{ \int_x^\ell \int_0^x k_\epsilon(z) k_\epsilon(u) (K(z)-K(u)) \,du \,dz\right\},
   \end{align*}
   where in the last equality we have used the fact (by symmetry) that $\int_0^x\int_0^x k_\epsilon(z)k_\epsilon(u)\left(K(z)-K(u)\right)\,du \,dz=0$.
%which is positive since $H(y)-H(z)\geq 0$ for $y\geq z$ and $H'(x)=\delta(x)> 0$.

In conclusion, we proved \eqref{E:RestrictedEXP 1-d}. The lemma then follows from \eqref{E:RestrictedEXP 1-d}.
\end{proof}

\begin{proof}[Proof of Theorem \ref{T:AsymTime}]
Recall the formula \eqref{E:ConEXP 1-d} in Lemma \eqref{Lem:ConEXP 1-d}.
Fix $\epsilon>0$ and let $x\to 0$, the denominator in \eqref{E:ConEXP 1-d} is of order $x$ in the sense that
\begin{equation}\label{D_xto0}
\lim_{x\to 0}\frac{\int_0^x k_\epsilon(z) dz}{x}=k_\epsilon(0)=1.
\end{equation}
The numerator of \eqref{E:ConEXP 1-d} is also of order $x$ in the sense that, 
\begin{align}\label{N_xto0}
\lim_{x\to 0}\frac{1}{x}\int_x^\ell \int_0^x k_\epsilon(z) k_\epsilon(u) \int_u^z  \frac{p(y)}{k_\epsilon(y)}\, dy \,du \,dz
=\int_0^\ell k_\epsilon(u) \int_0^u \frac{p(y)}{k_\epsilon(y)}\, dy \,du.
\end{align}
Equation \eqref{N_xto0} follows from the L'Hospital rule and the Leibniz integral rule as follows. Define
\begin{align*}
L(x)&:=\int_x^\ell \int_0^x k_\epsilon(z) k_\epsilon(u) \int_u^z  \frac{p(y)}{k_\epsilon(y)}\, dy \,du \,dz\\ 
    F_1(x,z)&:=\int_0^x k_\epsilon(z) k_\epsilon(u) \int_u^z  \frac{p(y)}{k_\epsilon(y)}\, dy \,du\\
    G_1(z,u)&:=k_\epsilon(z)k_\epsilon(u) \int_u^z  \frac{p(y)}{k_\epsilon(y)}\, dy.
    %\frac{\partial}{ \partial x}F_1(x,z)&=G_1(z,x)
\end{align*}
Then $L(x)=\int_x^\ell F_1(x,z) \,dz $ and $F_1(x,z)=\int_0^x G_1(z,u) \,du$.
By Leibniz integral rule,
 \begin{align*}
L'(x)
&= -F_1(x,x) + \int_x^\ell \frac{\partial}{ \partial x}F_1(x,z) \,dz\\
&= -k_\epsilon(x) \int_0^x  k_\epsilon(u) \int_u^x  \frac{p(y)}{k_\epsilon(y)}\, dy \,du + \int_x^\ell G_1(z,x) \,dz\\
%&= k_\epsilon(x) \int_0^x  k_\epsilon(u) \int_x^u   \frac{p(y)}{k_\epsilon(y)}\, dy \,du + \int_x^\ell G_1(u,x) \,du\\
&= k_\epsilon(x) \int_0^x  k_\epsilon(u) \int_x^u  \frac{p(y)}{k_\epsilon(y)}\, dy \,du + k_\epsilon(x) \int_x^\ell k_\epsilon(u) \int_x^u \frac{p(y)}{k_\epsilon(y)}\, dy \,du\\
&=k_\epsilon(x) \left\{ \int_0^\ell k_\epsilon(u) \int_x^u \frac{p(y)}{k_\epsilon(y)}\, dy \,du\right\}.
\end{align*}
Letting $x\to 0$ and applying L'Hospital rule, we obtain \eqref{N_xto0}.
By \eqref{D_xto0} and \eqref{N_xto0}, the proof of \eqref{E:AsymTime1} is complete.

The second and the third derivatives of $L(x)$ are
\begin{align*}
L''(x)= &k_\epsilon'(x)\int_0^\ell k_\epsilon(u) \int_x^u \frac{p(y)}{k_\epsilon(y)}\, dy \,du- p(x)\int_0^\ell k_\epsilon(u)\,du\\
L^{(3)}(x)= &k_\epsilon''(x)\int_0^\ell k_\epsilon(u) \int_x^u \frac{p(y)}{k_\epsilon(y)} dy du -  \frac{k_\epsilon'(x)p(x)}{k_\epsilon(x)}\int_0^\ell k_\epsilon(u) du- p'(x) \int_0^\ell k_\epsilon(u)du.
\end{align*}

Note that $k'_\epsilon(x)=\frac{-2}{\epsilon^2} F(x)\,k_\epsilon(x)$. So $F(0)=0$ implies $k'_\epsilon(0)=0$. Furthermore,  $k''_\epsilon(0)=\frac{-2}{\epsilon^2}F'(0)<0$ since $F'(0)>0$. These give $L''(0)=0$ and
\[
L^{(3)}(0)=\frac{-2F'(0)}{\epsilon^2}\,C-1 \,<\,0, \quad\text{where }C=\int_0^\ell k_\epsilon(u) \int_0^u \frac{p(y)}{k_\epsilon(y)}\, dy \,du=L'(0).
\]
Note that
\begin{align*}
\E_{x}[\tau_{\ell}\,|\,\tau_{\ell}<\tau_0]-\Psi(\epsilon)
=\, \frac{2}{\epsilon^2} \left(\frac{L(x)}{\int_0^x k_\epsilon(z) dz} - L'(0)\right).
\end{align*}
Hence 
\begin{align*}
\lim_{x\to 0}\frac{\E_{x}[\tau_{\ell}\,|\,\tau_{\ell}<\tau_0]-\Psi(\epsilon)}{x^2}
=&\, \frac{2}{\epsilon^2} \lim_{x\to 0}\frac{L(x) - L'(0)\int_0^x k_\epsilon(z) dz}{x^2\,\int_0^x k_\epsilon(z) dz}\\
=&\, \frac{2}{\epsilon^2} \lim_{x\to 0}\frac{L'(x) - L'(0)\, k_\epsilon(x)}{x^2 k_\epsilon(x) + 2x\int_0^x k_\epsilon(z) dz}\\
=&\, \frac{2}{\epsilon^2} \lim_{x\to 0}\frac{L''(x) - L'(0)\, k'_\epsilon(x)}{x^2 k_\epsilon'(x) +  4xk_\epsilon(x) + 2\int_0^x k_\epsilon(z) dz}\\
=&\, \frac{2}{\epsilon^2} \lim_{x\to 0}\frac{L^{(3)}(x) - L'(0)\, k''_\epsilon(x)}{x^2 k_\epsilon''(x)+ 6xk'_\epsilon(x)+6k_\epsilon(x) }\\
=&\, \frac{2}{\epsilon^2}\frac{L^{(3)}(0) - L'(0)\, k''_\epsilon(0)}{6}\\
=&\, \frac{1}{3\epsilon^2}\left(\frac{-2F'(0)}{\epsilon^2}\,C-1\right) + C\, \frac{2F'(0)}{\epsilon^2}\\
=&\,\frac{-1}{3\epsilon^2}.
\end{align*}
\end{proof} 

\section*{Acknowledgment}
This research was partially supported by the ONR/DRI Award N000142012411. DS wish to also thank ``Andreas Mentzelopoulos Scholarships University of Patras" for their financial support during his doctoral program at Indiana University. 
\bibliographystyle{ws-m3as}
\bibliography{ws-m3as}

\providecommand{\href}[2]{#2}
\providecommand{\arxiv}[1]{\href{http://arxiv.org/abs/#1}{arXiv:#1}}
\providecommand{\url}[1]{\texttt{#1}}
\providecommand{\urlprefix}{URL }
\begin{thebibliography}{10}

\bibitem{bakhtin2017scaling}
\newblock Y.~Bakhtin and Z.~Pajor-Gyulai,
\newblock Scaling limit for escapes from unstable equilibria in the vanishing
  noise limit: nontrivial jordan block case, 2017.

\bibitem{bass1994probabilistic}
\newblock R.~F. Bass,
\newblock \emph{Probabilistic techniques in analysis},
\newblock Springer Science \& Business Media, 1994.

\bibitem{Bryan_etal2017}
\newblock G.~H. Bryan, N.~A. Dahl, D.~S. Nolan and R.~Rotunno,
\newblock An eddy injection method for large-eddy simulations of tornado-like
  vortices,
\newblock \emph{Monthly Weather Review}, \textbf{145} (01 May. 2017), 1937 --
  1961.

\bibitem{budhiraja2017uniform}
\newblock A.~Budhiraja and W.-T.~L. Fan,
\newblock Uniform in time interacting particle approximations for nonlinear
  equations of patlak-keller-segel type,
\newblock \emph{Electronic Journal of Probability}, \textbf{22}.

\bibitem{DownsKieu2020}
\newblock A.~Downs and C.~Kieu,
\newblock A look at the relationship between the large-scale tropospheric
  static stability and the tropical cyclone maximum intensity,
\newblock \emph{Journal of Climate}, \textbf{33} (2020), 959--975.

\bibitem{MR838085}
\newblock S.~N. Ethier and T.~G. Kurtz,
\newblock \emph{Markov processes},
\newblock Wiley Series in Probability and Mathematical Statistics: Probability
  and Mathematical Statistics, John Wiley \& Sons, Inc., New York, 1986,
\newblock \urlprefix\url{https://doi.org/10.1002/9780470316658},
\newblock Characterization and convergence.

\bibitem{Ferrara_etal2017}
\newblock M.~Ferrara, F.~Groff, Z.~Moon, K.~Keshavamurthy, S.~M. Robeson and
  C.~Kieu,
\newblock Large-scale control of the lower stratosphere on variability of
  tropical cyclone intensity,
\newblock \emph{Geophys. Res. Lett.}, 2017GL073327,
\newblock
  \urlprefix\url{http://onlinelibrary.wiley.com/doi/10.1002/2017GL073327/abstract}.

\bibitem{Fisher_etal2019}
\newblock M.~S. Fischer, B.~H. Tang and K.~L. Corbosiero,
\newblock A climatological analysis of tropical cyclone rapid intensification
  in environments of upper-tropospheric troughs,
\newblock \emph{Monthly Weather Review}, \textbf{147} (01 Oct. 2019), 3693 --
  3719.

\bibitem{Fischer_etal18}
\newblock M.~S. Fischer, B.~H. Tang, K.~L. Corbosiero and C.~M. Rozoff,
\newblock Normalized convective characteristics of tropical cyclone rapid
  intensification events in the north atlantic and eastern north pacific,
\newblock \emph{Monthly Weather Review}, \textbf{146} (01 Apr. 2018), 1133 --
  1155.

\bibitem{HillLackmann2011}
\newblock K.~A. Hill and G.~M. Lackmann,
\newblock The impact of future climate change on tc intensity and structure: A
  downscaling approach,
\newblock \emph{Journal of Climate}, \textbf{24} (2011), 4644 -- 4661.

\bibitem{KaplanDemaria2003}
\newblock J.~Kaplan and M.~DeMaria,
\newblock Large-scale characteristics of rapidly intensifying tropical cyclones
  in the north atlantic basin,
\newblock \emph{Weather and Forecasting}, \textbf{18} (01 Dec. 2003), 1093 --
  1108.

\bibitem{Kaplan_etal2015}
\newblock J.~Kaplan, C.~M. Rozoff, M.~DeMaria, C.~R. Sampson, J.~P. Kossin,
  C.~S. Velden, J.~J. Cione, J.~P. Dunion, J.~A. Knaff, J.~A. Zhang, J.~F.
  Dostalek, J.~D. Hawkins, T.~F. Lee and J.~E. Solbrig,
\newblock Evaluating environmental impacts on tropical cyclone rapid
  intensification predictability utilizing statistical models,
\newblock \emph{Weather and Forecasting}, \textbf{30} (2015), 1374 -- 1396.

\bibitem{MR1121940}
\newblock I.~Karatzas and S.~E. Shreve,
\newblock \emph{Brownian motion and stochastic calculus}, vol. 113 of Graduate
  Texts in Mathematics,
\newblock 2nd edition,
\newblock Springer-Verlag, New York, 1991,
\newblock \urlprefix\url{https://doi.org/10.1007/978-1-4612-0949-2}.

\bibitem{Kieu2015}
\newblock C.~Q. Kieu,
\newblock Hurricane maximum potential intensity equilibrium,
\newblock \emph{Q.J.R. Meteorol. Soc.}, \textbf{141} (2015), 2471--2480.

\bibitem{Kieu_etal2013}
\newblock C.~Q. Kieu, V.~Tallapragada and W.~A. Hogsett,
\newblock On the onset of the tropical cyclone rapid intensification in the
  hwrf model,
\newblock \emph{Geophys. Res. Lett.}, \textbf{9} (2013), 3298--3306.

\bibitem{Kieu_etal2021}
\newblock C.~Kieu, C.~Evans, Y.~Jin, J.~D. Doyle, H.~Jin and J.~Moskaitis,
\newblock Track dependence of tropical cyclone intensity forecast errors in the
  coamps-tc model,
\newblock \emph{Weather and Forecasting}, \textbf{36} (01 Apr. 2021), 469 --
  485.

\bibitem{Kieu_etal2018}
\newblock C.~Kieu, K.~Keshavamurthy, V.~Tallapragada, S.~Gopalakrishnan and
  S.~Trahan,
\newblock On the growth of intensity forecast errors in the operational
  hurricane weather research and forecasting (hwrf) model,
\newblock \emph{Q. J. R. Meteorol. Soc.}, \textbf{144} (2018), 1803--1819.

\bibitem{KieuZhang2018}
\newblock C.~Kieu and D.-L. Zhang,
\newblock The control of environmental stratification on the hurricane maximum
  potential intensity,
\newblock \emph{Geophysical Research Letters}, \textbf{45} (2018), 6272--6280.

\bibitem{KieuMoon2016}
\newblock C.~Q. Kieu and Z.~Moon,
\newblock Hurricane {Intensity} {Predictability},
\newblock \emph{Bull. Amer. Meteor. Soc.}

\bibitem{KieuWang2017a}
\newblock C.~Q. Kieu and Q.~Wang,
\newblock Stability of tropical cyclone equilibrium.,
\newblock \emph{J. Atmos. Sci.}, \textbf{74} (2017), 3591--3608.

\bibitem{KloedenPlaten1992}
\newblock P.~Kloeden and E.~Platen,
\newblock \emph{Numerical Solution of Stochastic Differential Equations},
\newblock Springer, Berlin, 1992.

\bibitem{KowchEmanuel2015}
\newblock R.~Kowch and K.~Emanuel,
\newblock Are special processes at work in the rapid intensification of
  tropical cyclones?,
\newblock \emph{Monthly Weather Review}, \textbf{143} (01 Mar. 2015), 878 --
  882.

\bibitem{majda1999models}
\newblock A.~J. Majda, I.~Timofeyev and E.~V. Eijnden,
\newblock Models for stochastic climate prediction,
\newblock \emph{Proceedings of the National Academy of Sciences}, \textbf{96}
  (1999), 14687--14691.

\bibitem{mcloone2018stochasticity}
\newblock B.~McLoone, W.-T.~L. Fan, A.~Pham, R.~Smead and L.~Loewe,
\newblock Stochasticity, selection, and the evolution of cooperation in a
  two-level moran model of the snowdrift game,
\newblock \emph{Complexity}, \textbf{2018}.

\bibitem{monter2010scaling}
\newblock S.~A.~A. Monter and Y.~Bakhtin,
\newblock Scaling limit for the diffusion exit problem in the levinson case,
  2010.

\bibitem{MoonKieu2017}
\newblock Z.~Moon and C.~Kieu,
\newblock Impacts of the lower stratosphere on the development of intense
  tropical cyclones,
\newblock \emph{Atmosphere}, \textbf{8},
\newblock \urlprefix\url{https://www.mdpi.com/2073-4433/8/7/128}.

\bibitem{NavarroHakim2016}
\newblock E.~L. Navarro and G.~J. Hakim,
\newblock Idealized numerical modeling of the diurnal cycle of tropical
  cyclones,
\newblock \emph{Journal of the Atmospheric Sciences}, \textbf{73} (01 Oct.
  2016), 4189 -- 4201.

\bibitem{NguyenChanhFan}
\newblock P.~Nguyen, C.~Kieu and W.-T.~L. Fan,
\newblock Stochastic variability of tropical cyclone intensity at the maximum
  potential intensity equilibrium,
\newblock \emph{Journal of the Atmospheric Sciences}, \textbf{77} (01 Sep.
  2020), 3105 -- 3118,
\newblock
  \urlprefix\url{https://journals.ametsoc.org/view/journals/atsc/77/9/jasD200070.xml}.

\bibitem{Rappaport_etal2012}
\newblock E.~N. Rappaport, J.-G. Jiing, C.~W. Landsea, S.~T. Murillo and J.~L.
  Franklin,
\newblock The joint hurricane test bed: Its first decade of tropical cyclone
  research-to-operations activities reviewed,
\newblock \emph{Bull. Amer. Meteor. Soc.}, \textbf{93} (2012), 371--380.

\bibitem{Rasp_etal2018}
\newblock S.~Rasp, T.~Selz and G.~C. Craig,
\newblock Variability and clustering of midlatitude summertime convection:
  Testing the craig and cohen theory in a convection-permitting ensemble with
  stochastic boundary layer perturbations,
\newblock \emph{Journal of the Atmospheric Sciences}, \textbf{75} (01 Feb.
  2018), 691 -- 706.

\bibitem{Rozoff_etal2015}
\newblock C.~M. Rozoff, C.~S. Velden, J.~Kaplan, J.~P. Kossin and A.~J.
  Wimmers,
\newblock Improvements in the probabilistic prediction of tropical cyclone
  rapid intensification with passive microwave observations,
\newblock \emph{Weather and Forecasting}, \textbf{30} (2015), 1016 -- 1038.

\bibitem{Sampson_etal2011}
\newblock C.~R. Sampson, J.~Kaplan, J.~A. Knaff, M.~DeMaria and C.~A. Sisko,
\newblock A deterministic rapid intensification aid,
\newblock \emph{Weather and Forecasting}, \textbf{26} (01 Aug. 2011), 579 --
  585.

\bibitem{Shen_etal2000}
\newblock W.~Shen, R.~E. Tuleya and I.~Ginis,
\newblock A sensitivity study of the thermodynamic environment on gfdl model
  hurricane intensity: Implications for global warming,
\newblock \emph{Journal of Climate}, \textbf{13} (2000), 109 -- 121.

\bibitem{Tallapragada_etal2014a}
\newblock V.~Tallapragada, C.~Kieu, Y.~Kwon, S.~Trahan, Q.~Liu, Z.~Zhang and
  I.-H. Kwon,
\newblock Evaluation of {Storm} {Structure} from the {Operational} {HWRF}
  during 2012 {Implementation},
\newblock \emph{Mon. Wea. Rev.}, \textbf{142} (2014), 4308--4325.

\bibitem{Tallapragada_etal2015}
\newblock V.~Tallapragada, C.~Kieu, S.~Trahan, Q.~Liu, W.~Wang, Z.~Zhang,
  M.~Tong, B.~Zhang, L.~Zhu and B.~Strahl,
\newblock Forecasting {Tropical} {Cyclones} in the {Western} {North} {Pacific}
  {Basin} using the {NCEP} {Operational} {HWRF} {Model}. {Model} {Upgrades} and
  {Evaluation} of {Real}-{Time} {Performance} in 2013,
\newblock \emph{Wea. Forecasting}.

\bibitem{TaoJiang2015}
\newblock C.~Tao and H.~Jiang,
\newblock Distributions of shallow to very deep precipitation–convection in
  rapidly intensifying tropical cyclones,
\newblock \emph{Journal of Climate}, \textbf{28} (15 Nov. 2015), 8791 -- 8824.

\bibitem{Tuleya_etal2016}
\newblock R.~E. Tuleya, M.~Bender, T.~R. Knutson, J.~J. Sirutis, B.~Thomas and
  I.~Ginis,
\newblock Impact of upper-tropospheric temperature anomalies and vertical wind
  shear on tropical cyclone evolution using an idealized version of the
  operational gfdl hurricane model,
\newblock \emph{Journal of the Atmospheric Sciences}, \textbf{73} (2016), 3803
  -- 3820.

\bibitem{Yang2016}
\newblock R.~Yang,
\newblock A systematic classification investigation of rapid intensification of
  atlantic tropical cyclones with the ships database,
\newblock \emph{Weather and Forecasting}, \textbf{31} (2016), 495 -- 513.

\end{thebibliography}
\end{document}